        \definecolor{pink}{rgb}{1,0,1}
\newtheorem{theorem}{Theorem}
\newtheorem{prop}[theorem]{Proposition}
\newtheorem{cor}[theorem]{Corollary}
\newtheorem{notation}[theorem]{Notation} 
\theoremstyle{remark}
\newtheorem{remark}{Remark}
\theoremstyle{definition}
\newtheorem{defn}[theorem]{Definition}
\numberwithin{equation}{section}
\newcommand{\pa}{\partial}
\newcommand{\cL}{{\mathcal{L}}}
\newcommand{\R}{\mathbb{R}}
\newcommand{\Z}{\mathbb{Z}}
\DeclareMathOperator{\Tr}{Tr}
\title{The Neumann isospectral problem for trapezoids} 
\author[Hamid Hezari]{Hamid Hezari}\address{Department of
Mathematics, 510J Rowland Hall -
University of California, Irvine, CA 92697-3875.}
\email{hezari@math.uci.edu}
\author[Zhiqin Lu]{Zhiqin Lu}\address{Department of
Mathematics,410D Rowland Hall - University of
California, Irvine, CA 92697-3875.} \email{zlu@uci.edu}
\author[Julie Rowlett]{Julie Rowlett} \address{Mathematics
Department, Chalmers University and the
University of Gothenburg, 41296, Gothenburg Sweden}
\email{julie.rowlett@chalmers.se}
\keywords{isospectral; trapezoid; polygons; heat
invariants; wave invariants, diffraction,
inverse spectral problems. MSC primary 58C40, secondary 35P99.}
\begin{document}

\begin{abstract}

We show that trapezoids with identical Neumann spectra are congruent up to rigid
motions of the plane. The proof is
based on heat trace invariants and some new wave trace
invariants associated to certain  
diffractive billiard trajectories.  We use the method of reflections to express the Dirichlet and Neumann wave kernels in terms of the wave kernel of the \textit{double polygon}. Using Hillairet's 
trace formulas for isolated diffractive geodesics and one-parameter families of regular geodesics with \textit{geometrically diffractive} boundaries for Euclidean surfaces with conic singularities \cite{Hi}, we obtain the new wave trace invariants for trapezoids.  To handle the reflected term, we use another result of \cite{Hi}, which gives an FIO representation for the Cheeger-Taylor parametrix \cite{ChTa1, ChTa2} of the wave propagator near diffractive geodesics. The reason we can only treat the Neumann case is that the wave trace is ``more singular" for the Neumann case compared to the Dirichlet case. This is a new observation which is interesting on its own. 
\end{abstract}

\maketitle



\section{Introduction}

Our main result is the following: 
\begin{theorem} \label{th:main} Let $T_1$ and $T_2$ be two
trapezoidal domains in $\R^2$. Then if
the spectra of the Euclidean Laplacian with
Neumann boundary conditions coincide for
$T_1$ and $T_2$, the trapezoids are congruent, that is
equivalent up to rigid motions of the
plane.
\end{theorem}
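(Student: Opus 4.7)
The plan is to combine classical heat trace invariants with new wave trace invariants coming from isolated diffractive geodesics in order to reconstruct the four continuous parameters of a trapezoid. Parameterize a trapezoid (up to congruence) by the two parallel side lengths $a\le b$ and the two base angles $\alpha,\beta\in(0,\pi)$ at the endpoints of the longer parallel side; the remaining two interior angles are $\pi-\alpha$ and $\pi-\beta$, and the height $h$ and non-parallel side lengths $h/\sin\alpha$, $h/\sin\beta$ are determined by $a,b,\alpha,\beta$ together with the constraint $b-a=h(\cot\alpha+\cot\beta)$.

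First, I would extract three spectral invariants from the Neumann heat trace expansion of McKean--Singer/Kac type for polygons: the area $A=\tfrac12 h(a+b)$, the perimeter $P=a+b+h/\sin\alpha+h/\sin\beta$, and the corner contribution
\begin{equation*}
\sum_{i=1}^{4}\frac{\pi^{2}-\alpha_{i}^{2}}{24\pi\alpha_{i}},
\qquad \{\alpha_{i}\}=\{\alpha,\beta,\pi-\alpha,\pi-\beta\},
\end{equation*}
which is a known symmetric function of the angles. These three invariants yield three polynomial equations in the four unknowns $(a,b,\alpha,\beta)$, so one more invariant is required.

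Next, I would produce this missing invariant from the wave trace. Following the strategy outlined in the abstract, I would unfold the trapezoid $T$ by reflecting across its boundary to form the \emph{double polygon} $\widetilde T$, a Euclidean surface with four conic singularities of total angles $2\alpha,2\beta,2(\pi-\alpha),2(\pi-\beta)$. For Neumann boundary conditions, the symmetric continuation identifies the wave kernel on $T$ with a sum of two pieces on $\widetilde T$: a direct piece and a reflected piece. I would then apply Hillairet's trace formula \cite{Hi} for isolated diffractive geodesics together with his FIO representation of the Cheeger--Taylor parametrix \cite{ChTa1,ChTa2} to localize the singularities of the wave trace at lengths of corner-to-corner geodesics. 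The two diagonals $V_1V_3$ and $V_2V_4$ of the trapezoid are the shortest such isolated diffractive geodesics (or, more precisely, the two shortest lengths among diagonals and corner-to-corner broken geodesics can be isolated via the singularity structure), giving rise to new spectral invariants
\begin{equation*}
d_{1}^{\,2}=b^{2}+h^{2}\csc^{2}\beta-2bh\cot\beta, \qquad
d_{2}^{\,2}=b^{2}+h^{2}\csc^{2}\alpha-2bh\cot\alpha.
\end{equation*}
The fact that the Neumann wave trace is \emph{more singular} at diffractive lengths than the Dirichlet one---a point flagged in the abstract---is precisely what guarantees that these leading singularities survive and are detectable in the Neumann spectrum.

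Finally, with $A,P$, the corner invariant, and at least one diagonal length $d_1$ (or $d_2$) in hand, I would close the argument by a direct algebraic/geometric uniqueness computation: eliminate $h$ and one of $a,b$ using $A$ and $b-a=h(\cot\alpha+\cot\beta)$, then show that the remaining system in $(\alpha,\beta)$ together with the angle invariant and $d_1$ has at most one solution with $\alpha,\beta\in(0,\pi)$ up to the symmetry $\alpha\leftrightarrow\beta$ (which corresponds to reflecting the trapezoid and so does not affect congruence). The main obstacle I anticipate is the wave trace step: Hillairet's trace formula produces, at each admissible diffractive length, a sum over all geometric geodesics of that length, including contributions from closed geodesics in $\widetilde T$ passing through reflected copies of corners, and regular one-parameter families with geometrically diffractive endpoints. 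The technical heart of the argument is to separate the genuine diagonal contributions from these reflected and family contributions---exploiting the distinct orders of the singularities predicted by \cite{Hi}---and to verify non-cancellation of the leading coefficient under the Neumann symmetrization. Once that is done, the algebraic step in the final paragraph should be comparatively routine.
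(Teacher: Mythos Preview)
Your overall architecture---heat invariants plus a wave trace invariant extracted via doubling and Hillairet's formulas---matches the paper's. The crucial gap is in your choice of diffractive orbits and, correspondingly, in the identification step.

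You propose reading off the diagonal lengths $d_1, d_2$ from the wave trace. But the wave trace has singularities at \emph{all} lengths in $\mathcal{L}_T$, and you give no mechanism to pick out which singularity belongs to a diagonal as opposed to some other orbit. Your parenthetical (``can be isolated via the singularity structure'') does not solve this: the order of a singularity distinguishes orbit \emph{type} (isolated diffractive vs.\ one-parameter family vs.\ non-degenerate reflecting), not which particular isolated diffractive orbit you are seeing. In particular, the bouncing-ball orbit $\gamma_b$ along the top edge (length $2b$) is an isolated diffractive orbit of the same type as your diagonals and is strictly shorter than either of them, so it contributes before them; and there is no a priori spectral way to skip past $\gamma_b$ and land on a diagonal.

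The paper avoids this by using, instead of diagonals, the orbits whose lengths are provably minimal in $\mathcal{L}_T$: the altitude family $\gamma_h$ (length $2h$) and the top-edge orbit $\gamma_b$ (length $2b$). A short geometric lemma (Proposition~\ref{Shortest}) shows every closed orbit has length $>2h$ or $>2b$ unless it is one of these, so the \emph{first} singularity of the Neumann wave trace sits at $\min(2h,2b)$ and is unambiguously identifiable. If $h\le b$, Proposition~\ref{height} extracts $h$ and the inner-rectangle area $bh$ from that singularity, and then $(A,L,h,b)$ determine the trapezoid by elementary trigonometry (Proposition~\ref{2h}). If $b<h$, Proposition~\ref{TopEdge} extracts $b$ and the diffraction coefficient $C_{\alpha,\beta}$, and then $(A,q_{\alpha,\beta},b,C_{\alpha,\beta})$ determine the trapezoid---but this last step (Proposition~\ref{2b}) is \emph{not} routine: it requires a careful monotonicity argument showing that $C_{\alpha,\beta}$ is strictly increasing along level curves of $q_{\alpha,\beta}$. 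Your closing ``should be comparatively routine'' underestimates this; even had you successfully extracted a diagonal length, the analogous uniqueness claim for the system $(A,P,q,d_1)$ is unproven and would itself be the hard part.
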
 
\begin{figure} \begin{center}
\includegraphics[width=200pt]{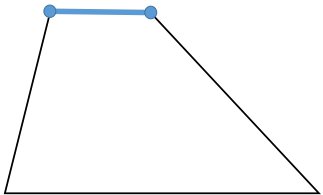} \caption{The orbit $\gamma_b$, which bounces between two diffractive conic
singularities, contributes a singularity at
$t=2b$ of order $(-\frac{1}{2})^+$ to the wave trace of the
\textit{double of the trapezoid} (as an
ESCS) if there are no other orbits of the same length. Hence $2b$ is a spectral invariant if both Dirichlet and Neumann spectra are known. In fact, as we prove, $2b$ is a spectral invariant for the Neumann spectrum.} \label{b}
\end{center} \end{figure}

Our proof relies on heat trace invariants and also some new wave trace
invariants associated to some
diffractive billiard trajectories.  We first use the method of reflections to express the Dirichlet and Neumann wave kernels in terms of the wave kernel of the \textit{double of the trapezoid} which can be realized as a Euclidean surface with conical singularities (ESCS). We then obtain new wave trace invariants using
two results of Hillairet \cite{Hi} for ESCS.  The first is a parametrix construction of the wave propagator near diffractive geodesics as an FIO, which we will use for the reflected term. Such paramatrices were found by Cheeger-Taylor \cite{ChTa1, ChTa2}, however expressing them in the language of FIOs was first done by Hillairet in \cite{Hi}. For the non-reflected term we use trace formulas of \cite{Hi} associated to isolated diffractive geodesics and to one-parameter families of regular geodesics with \textit{geometrically diffractive} boundaries. More precisely we apply the trace formulas of \cite{Hi} to the diffractive bouncing ball orbit associated to the top edge of the trapezoid (Figure \ref{b}), and to the one-parameter family of bouncing ball orbits associated to the altitudes of the trapezoid (Figure \ref{h}). The lengths of these orbits and the principal terms of the singularity expansions of the Neumann wave trace at these lengths provide new spectral invariants for the trapezoid.  Together with the well known heat trace invariants, these can be used to prove spectral uniqueness of a trapezoid amongst all trapezoids.  

The reason we can only treat the Neumann case is that in some sense the wave trace is ``more singular" for the Neumann case when compared to the Dirichlet case. This is a new feature which is of independent interest. In fact the Neumann wave trace has a larger singularity at $2b$ (See Figure \ref{b}) than the Dirichlet wave trace. It would be interesting to study the singularity of the Dirichlet wave trace at $2b$ (if singular at all), but since we do not require this for our inverse result for the Neumann boundary condition, we refrain from exploring this question here.  In a future work we shall study the isospectral problem in the Dirichlet case.  


\begin{figure} \begin{center}
\includegraphics[width=300pt]{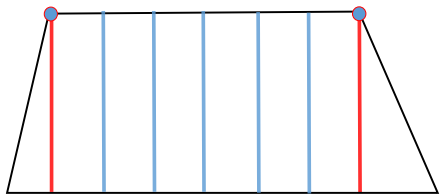} \caption{The blue
lines show the interior of a one-parameter family of
non-diffractive bouncing ball orbits of
length $2h$. The red periodic orbits, which are both
\textit{geometrically diffractive}, are the
boundaries of the blue family. The family, which we call $\gamma_h$, contributes a
singularity at $t=2h$ of order $1^+$ to
both Dirichlet and Neumann wave traces if there are no other periodic orbits of length
$2h$ with the same order of
singularity.} \label{h} \end{center} \end{figure}

\section{Background} 
The isospectral problem is: if two Riemannian manifolds are
isospectral, then are they isometric?
For a Riemannian manifold $(M, g)$ the spectrum in question is
for the Laplace operator
$$\Delta = - \sum_{i,j=1} ^n \frac{1}{\sqrt{ \det(g)}} \ \pa_i
g^{ij} \sqrt{ \det (g)} \ \pa_j.$$
The answer in this generality is \em no, \em and was proven by
Milnor in 1964 \cite{Mi}. He
used a construction of Witt \cite{Wi} of two self-dual
lattices $L_1$ and $L_2$ in $\R^{16}$
such that no rotation of $\R^{16}$ maps one to the other, but
such that the spectra of the
Riemannian manifolds $\R^{16}/L_i$ are identical for $i=1$,
$2$. Around the same time, M. Kac
wrote a popular article \cite{Ka}, ``Can one hear the shape of
a drum?'' He popularized the
isospectral problem for planar domains. Although this may seem
like an easier setting, it turned
out to be quite difficult to prove that the answer is in
general negative.

For a bounded domain $\Omega$ in $\R^2$, we consider the
Euclidean Laplacian $\Delta$ with
Dirichlet or Neumann boundary conditions,
\begin{equation} \label{lap} \Delta u(x,y) := - \frac{\pa^2
u}{\pa x^2} - \frac{\pa^2 u }{\pa y^2}
= \lambda u, \quad Bu = 0,
\end{equation} where $Bu=u|_{\partial \Omega}$ when $B=D$, and $Bu=\partial_\nu u|_{\partial \Omega}$ when $B=N$. 
For both boundary conditions, the eigenvalues, which depend on $B$, form a discrete subset of $\R$ of the form $0 \leq
\lambda_1 < \lambda_2 \leq \lambda_3
\leq \dots$. In the Dirichlet case, 
the spectrum is in bijection with the resonant frequencies a drum
would produce if $\Omega$ were its
drumhead. With a perfect ear one could hear all these
frequencies and therefore know the spectrum.
This is the origin of the title of Kac's paper \cite{Ka}.

Gordon, Webb, and Wolpert answered Kac's question in the
negative \cite{GoWeWo,GoWeWo1}, based on
Buser's work \cite{Bu}. The isospectral problem for surfaces
was previously demonstrated to
have a negative answer by \cite{Vi}. Buser's method relied on
a pasting procedure for pairs of
surfaces. In \cite{GoWeWo1}, they determined how to suitably
``fold'' two such curved surfaces to
create isospectral non-isometric planar domains. This general
idea of folding paper was later
presented in an accessible style by Chapman \cite{Chap}.

On the other hand, in some cases the isospectral problem has a
positive answer. If one considers
for example triangular domains in the plane, then if two such
domains are isospectral, the
triangles are congruent. The first proof of this fact is
contained in the doctoral thesis of C.
Durso \cite{Du}. She used the fact that the heat trace
implies that the area and perimeter are
spectral invariants, so any two triangles which are isospectral
must have the same area and
perimeter. To complete the proof, she used the wave trace and
demonstrated that the length of the
shortest closed geodesic in a triangular domain is also a
spectral invariant. More recently
Grieser and Maronna \cite{GrMa} realized that if one used an
additional spectral invariant from
the heat trace, then this together with the area and perimeter
uniquely determine the triangle.
That is a much simpler proof. Other types of domains which are
known to be spectrally determined
are analytic planar domains with reflective symmetries; see the works of Colin de Verdi\`ere \cite{CdV1, CdV2} and Zelditch \cite{Ze2, Ze}.

After triangles, one is naturally interested to know whether
the same result may hold for
quadrilaterals. For rectangles, this is a straightforward
exercise to prove that if two rectangles
are isospectral, then they are congruent. In fact, one only
requires the first two eigenvalues to
prove this fact. For parallelograms, it is also a
straightforward argument using the first three
heat trace invariants as in \cite{LuRo}. Of course the next
natural generalization is to
trapezoids. In this case, one can rather easily prove that the
geometric information which can be
extracted from the heat trace is insufficient to prove that
isospectral trapezoids are congruent.
It is therefore necessary to use the wave trace in the spirit
of \cite{Du}, which is a much
more delicate matter.
\\

In \S \ref{SpectralInvariants}, we review heat trace invariants associated to polygons and we also discover some new wave trace invariants for trapezoids. In \S \ref{ProofsOfProps}, we prove the main propositions on wave trace invariants using \cite{Hi}.  Finally, in \S \ref{ProofOfMain} we prove our inverse spectral result.


 \section{Spectral invariants of trapezoids}\label{SpectralInvariants}

\begin{defn}
A \em trapezoid \em is a convex quadrilateral which has two
parallel sides of lengths $b$ and $B$
with $B\geq b$. The side of length $B$ is called the \em base.
\em The two angles $\alpha,\beta$
adjacent to the base are called base angles. The angles at the
base satisfy
\[
0<\beta\leq\alpha\leq \frac\pi 2.  
\]
The other two sides of the trapezoid are known as \em legs \em
of lengths $\ell$ and $\ell'$,
respectively. If $\ell=\ell'$, then we say the trapezoid is \em
isosceles. \em The distance
between two parallel sides is called the \em height. \em
\end{defn}

\begin{figure} 
\begin{center}\includegraphics[width=170pt]{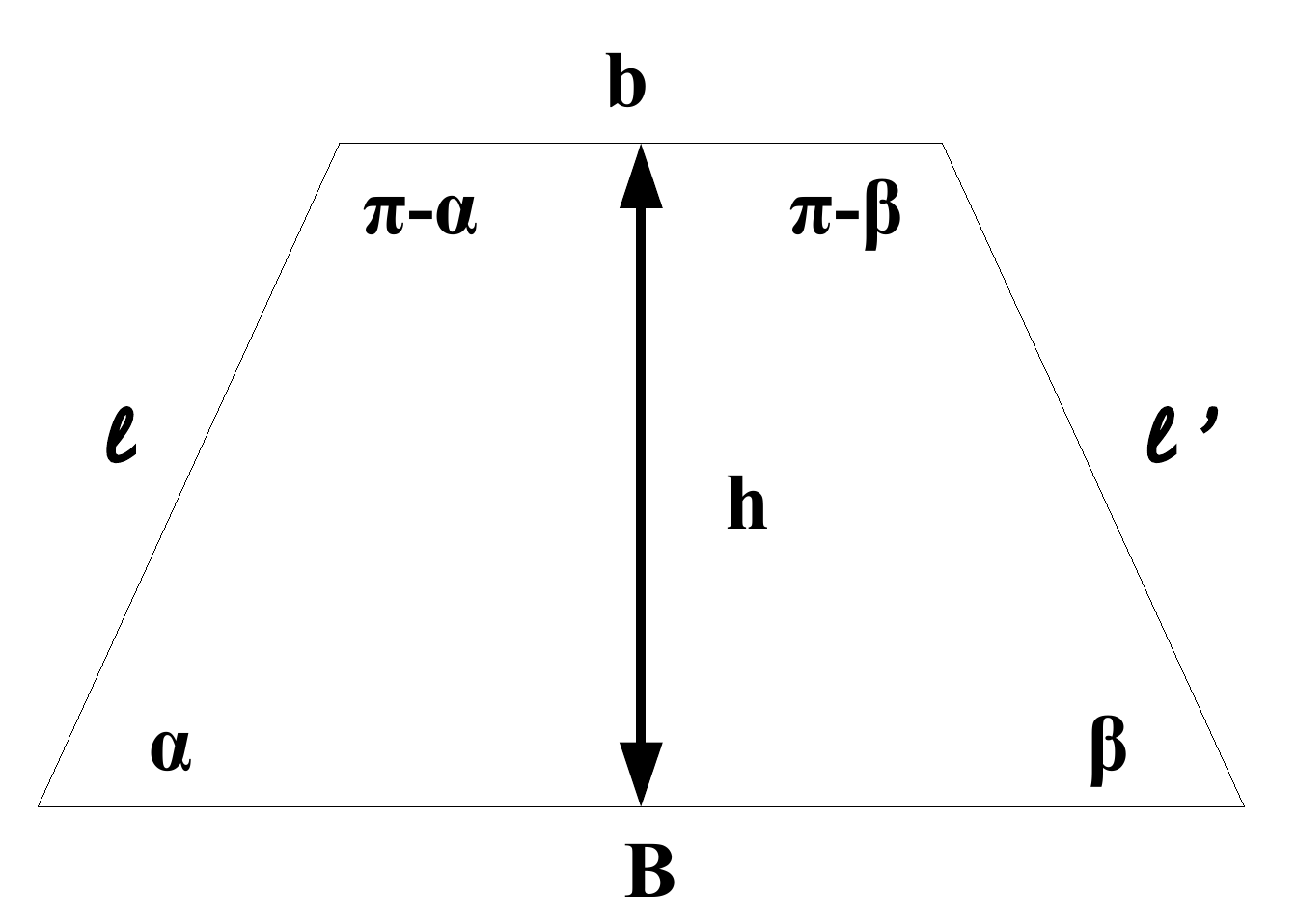}
\end{center} \caption{Parameters of a trapezoid} \label{trap}
\end{figure}

Any quantity which is uniquely determined by the spectrum is
known as a \em spectral invariant.  \em

\begin{notation}  \label{notation} If we are considering different boundary conditions on a domain $\Omega$, we shall use the notation $\Delta^B_\Omega$ to indicate the boundary condition $B$.  If we are considering a compact Riemannian manifold $M$ without boundary we shall use $\Delta_M$.  We shall use these notations when $\Omega$ is a polygon, and when $M$ is the double of a polygon as a compact Euclidean surface with conic singularities. 
\end{notation} 
 
 \subsection{The heat trace invariants} 
The heat trace
 $$\Tr e^{-\Delta^B_\Omega t} = \sum_{k \geq 1} e^{- \lambda_k(\Delta_\Omega^B) t},$$
is a spectral
invariant. It is an analytic
function for $\Re(t) > 0$ and has a singularity at $t=0$. It is
well known in this setting (see
\cite{Ka,McSi,vdBeSr,LuRo}) that the heat trace on a polygonal
domain $P$ admits an asymptotic
expansion 
\footnote{In fact in \cite{vdBeSr}, this is proved only for the Dirichlet Laplacian. That a similar asymptotic is valid for the Neumann case follows easily from the Dirichlet case and  the works of \cite{Ko, Fu} on heat trace asymptotics on ESCS; see also Remark \ref{HeatTraceNeumann}.} 
as $t \downarrow 0$,
\begin{equation} \label{HeatTrace}\Tr e^{-\Delta_P^B t} \sim \frac{|P|}{4 \pi t} +(-1)^{s(B)} \frac{| \pa
P|}{8 \sqrt{\pi t}} +
\sum_{k=1} ^n \frac{\pi^2 - \theta_k ^2}{24 \pi \theta_k} +
O(e^{-\frac{c}{t}}), \quad t \downarrow 0, \end{equation} where $c>0$, and $s(B)=1$ when $B=D$, and $S(B)=0$ if $B=N$.  Above, $|P|$ and $|\pa P|$ denote respectively the
area and perimeter of the domain
$P$, and $\theta_k$ are the interior angles. Since the
angles of a trapezoid are $\alpha$,
$\pi - \alpha$, $\beta$, and $\pi - \beta$, we therefore have
the following:  

\begin{prop} \label{AngleInvariant} For a trapezoidal domain, the area $A=|P|$, perimeter $L=|\pa
P|$, and the angle invariant
\[
q = q_{\alpha, \beta}=
\frac{1}{\alpha(\pi-\alpha)}+\frac{1}{\beta(\pi-\beta)},
\] are spectral invariants. 
\end{prop}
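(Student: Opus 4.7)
The strategy is to read off spectral invariants one coefficient at a time from the small-time heat trace asymptotic \eqref{HeatTrace}, which is already available for both Dirichlet and Neumann boundary conditions on a polygonal domain. Since the heat trace $\Tr e^{-\Delta_P^B t}$ is a function of the eigenvalues alone, any coefficient appearing in its asymptotic expansion as $t\downarrow 0$ is automatically a spectral invariant.

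First, I would isolate the coefficient of $t^{-1}$, which is $|P|/(4\pi)$; this shows the area $A=|P|$ is a spectral invariant. Next, the coefficient of $t^{-1/2}$ is $(-1)^{s(B)}|\partial P|/(8\sqrt{\pi})$, so the perimeter $L=|\partial P|$ is spectrally determined (note the sign $(-1)^{s(B)}$ is fixed once the boundary condition is known, so no ambiguity arises). Finally, the constant term in the expansion,
\[
\frac{1}{24\pi}\sum_{k=1}^{n}\frac{\pi^2-\theta_k^2}{\theta_k},
\]
is also a spectral invariant.

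For a trapezoid the interior angles are $\alpha,\ \pi-\alpha,\ \beta,\ \pi-\beta$, so I would substitute these into the angular sum and simplify. A direct computation gives
\[
\frac{\pi^2-\alpha^2}{\alpha}+\frac{\pi^2-(\pi-\alpha)^2}{\pi-\alpha}
=\frac{\pi^3}{\alpha(\pi-\alpha)}-\pi,
\]
and similarly for the pair $\beta,\pi-\beta$. Adding these two contributions yields
\[
\frac{1}{24\pi}\sum_{k=1}^{4}\frac{\pi^2-\theta_k^2}{\theta_k}
=\frac{\pi^2}{24}\,q_{\alpha,\beta}-\frac{1}{12},
\]
which is an affine function of $q_{\alpha,\beta}$ with nonzero leading coefficient. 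Consequently $q = q_{\alpha,\beta}$ is itself a spectral invariant.

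There is no real obstacle here once the heat trace expansion \eqref{HeatTrace} is in hand; the only thing to be careful about is the sign factor $(-1)^{s(B)}$ in the perimeter term, which is harmless because the boundary condition is fixed throughout, and the algebraic simplification of the angular sum, which is elementary but worth carrying out explicitly so that the coefficient in front of $q_{\alpha,\beta}$ is visibly nonzero. In particular, this proposition uses only the polynomially decaying part of the expansion; the exponentially small remainder $O(e^{-c/t})$ plays no role.
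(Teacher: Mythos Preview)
Your proposal is correct and follows exactly the approach the paper indicates: the paper simply notes that the trapezoid's interior angles are $\alpha$, $\pi-\alpha$, $\beta$, $\pi-\beta$ and states the proposition as an immediate consequence of the heat trace expansion \eqref{HeatTrace}, without writing out the algebra. Your explicit computation showing that the constant term equals $\tfrac{\pi^2}{24}\,q_{\alpha,\beta}-\tfrac{1}{12}$ is a nice addition that makes the dependence on $q_{\alpha,\beta}$ transparent.
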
 

\begin{remark} Note that by the definition of a trapezoid, 
$$q \geq \frac{8}{\pi^2},$$
and equality holds if and only if the trapezoid is actually a
rectangle.
\end{remark}

\begin{remark} One can show that these quantities $A$, $L$, and
$q$ are insufficient to determine
a trapezoid. In other words, considering any trapezoid $T$, up
to congruence via rigid motions of
the plane, there are infinitely many different trapezoids which have the same $A$, $L$, and $q$.
\end{remark}

\begin{remark} The remainder term in the heat trace decays
exponentially as $t \downarrow 0$, as shown
by \cite{vdBeSr, Ko}. It is therefore not feasible to extract further
geometric information from the
heat trace so we shall turn to a more subtle spectral
invariant: the wave trace. \end{remark}


\subsection{The wave trace invariants}
The wave trace is the trace of the wave propagator, also known as
the wave group, and is formally
$$w(t):=\Tr e^{i t\sqrt{\Delta} } = \sum_{k \geq 1} e^{it
\sqrt{\lambda_k} }.$$
This is purely formal, since the wave trace is only
well-defined when paired with a Schwartz class
test function; it is a tempered distribution by an easy
application of Weyl's law. It is defined
in more general settings such as compact Riemannian manifolds
without boundary as well as with boundary and with 
various boundary conditions. Duistermaat-Guillemin \cite{DuGu}
showed that in the case of compact
Riemannian manifolds without boundary the singular support of
the wave trace is contained in $\{ 0
\} \cup \pm \cL$, where $\cL$ is the set of lengths of closed
geodesics. They also found the
principal term in the singularity expansion when the orbit is
single and non-degenerate.
Guillemin-Melrose \cite{GuMe} studied this problem in the
presence of a smooth boundary and considered the 
Dirichlet, Neumman, as well as more general Robin boundary conditions. They
showed that in all cases 
$$ \text{SingSupp}\,w(t) \subset \{ 0 \} \cup \pm \cL ,$$
where$$\cL=\{ \text{lengths of generalized broken periodic
geodesics}\}.$$ Note that in this case the
length spectrum $\cL$ contains the lengths of all periodic
billiard trajectories hitting the
boundary transversally, as well as the lengths of ghost orbits
and the boundary itself when
trajectories become tangent to the boundary at some point.
Hence in a smooth convex planar domain
only the lengths of transversal billiard trajectories and the
boundary (and its multiples)
contribute to the length spectrum. Some experts conjecture that
the above containment for
$\text{SingSupp}\, w(t)$ is in fact an equality, but this has
neither been proven nor have any
counter-examples been discovered. The containment is an
equality on compact manifolds with
negative curvature \cite{DuGu}.

\subsubsection{The length spectrum of polygonal domains.}
{A polygonal domain is a planar domain whose boundary is a Euclidean polygon.  In the study of the length spectrum on such domains, the terminology polygonal table is often used due to the interpretation of a polygonal domain as the top surface of a billiard table and the identification of geodesic trajectories with billiard trajectories.}   Propagation of singularities of the wave
operator in polygonal tables or in general on manifolds with
corners or with conical singularities
are more difficult to study because of the diffraction
phenomena that takes place at the conical
singularities. Roughly speaking, when a geodesic that carries a
singularity of the wave hits a
conical singularity, it can reflect in all possible directions.
There is a huge literature on the
subject of diffraction, and for the sake of brevity we only list
the most relevant ones for our purposes: Keller
\cite{Ke}, Sommerfeld \cite{So}, Friedlander \cite{Fr1, Fr2},
Cheegar-Taylor \cite{ChTa1, ChTa2},
Melrose-Wunsch\cite{MeWu}. There has also been a lot of
research on the contribution of
diffractive geodesics to the wave trace; see Friedlander
\cite{Fr1}, Durso \cite{Du}, Wunsch
\cite{Wu}, Hillairet \cite{Hi2002, Hi}, Ford-Wunsch
\cite{FoWu}, and more recently
Hassell-Ford-Hillairet \cite{FoHaHi}. In the physics literature, we also note the works of 
Bogomolny-Pavloff-Schmit
\cite{BoPaSc} and Pavloff-Schmit \cite{PaSc}.

\begin{figure} 
\includegraphics[width=370pt]{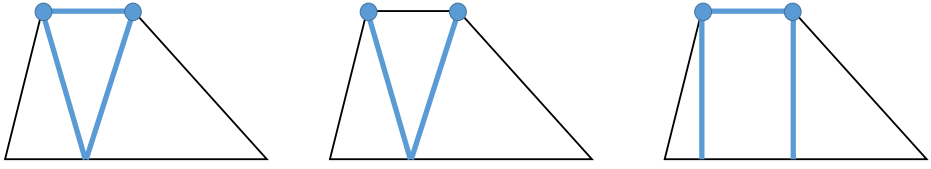}
\caption{Some diffractive periodic orbits of a
trapezoid.} \label{SomeOtherOrbits} \end{figure}

A standard technique used in studying the wave trace for polygonal tables is
to double the polygon along its
edges to obtain a compact \textit{Euclidean surface with conical 
singularities}, or ESCS, as commonly abbreviated in the literature.  A compact $n$-dimensional ESCS is a compact manifold with finitely many conical 
singularities which is locally isometric to $\R^n$ away from the conical points, 
and near conical points it is isometric
to a neighborhood of the vertex of a \textit{Euclidean cone} $C_\alpha$.

Let $P$ be a polygon, and let $P\rq{}$ be a
copy of $P$, disjoint from $P$, and $\mathcal R: P \to P\rq{}$
be the identity map. We use $2P$
for $(P \cup P\rq{}) / \sim$ where we have identified the
points of $\partial P$ and $\partial
P\rq{}$ under the map $\mathcal R$.  There is a canonical extension of $\mathcal R$ 
to an involution $\mathcal R:2P \to
2P$. The surface $2P$ is smooth everywhere except at the
vertices which are isolated conical singularities.  We note that
the cone angles are doubled under this procedure, meaning that the cone angle in the surface is twice the interior angle at the corresponding vertex in the polygon.  The Laplace operator on the ESCS arising from the doubled polygon $2P$ has many self-adjoint extensions.  We shall only consider the Friedrichs extension of the Laplacian on $2P$, which we denote simply by $\Delta_{2P}$.  

It was
proved by Hillairet \cite{Hi2002}
(and in a more general setting by Wunsch \cite{Wu}) that
$$ \text{SingSupp} \Tr e^{i t\sqrt{\Delta_{2P}}} \subset \{0\}
\cup \mathcal \pm \mathcal L_{2P}$$To describe $\mathcal
L_{2P}$ precisely, we first need to describe the geodesics on an  
ESCS, and
to do so we need some definitions. The conic points
are separated into two groups. {A conic point is called
\textit{non-diffractive} if its angle is equal to $\frac{2\pi}{N}$ for some positive integer $N$,} otherwise
it is called \textit{diffractive}.
We also use the same terminology for polygons except that
non-diffractive angles are of the form 
$\frac{\pi}{N}$. For example, if a trapezoid $T$ is not a
rectangle, then {the top vertex with
angle $\pi-\beta$ is diffractive, while the 
bottom two could be either diffractive or non-diffractive, and the top vertex with angle $\pi - \alpha$ is non-diffractive if and only if $\alpha=\frac{\pi}{2}$.}  

When a geodesic in a ESCS hits a non-diffractive conical singularity with cone angle $\frac{2 \pi}{N}$, it continues on a straight line in the cone $C_{2 \pi}$ (which is isomorphic to $\R^2$), as an $N$-fold covering space of $C_{2\pi/N}$. Hence if the incoming angle of a geodesic is $\theta_{in}$, its outgoing angle $\theta_{out}$ is $\Pi_N( \pi+ \theta_{in})$ where $\Pi_N: C_{2 \pi} \to    C_{2\pi/N}$ is the natural covering map.  In contrast, when a geodesic hits a diffractive conic point, it reflects according to \textit{Keller's democratic law of diffraction}, meaning that it reflects in all possible directions, and we  call $\theta_{out}-\theta_{in}$ the \textit{angle of diffraction}.  A geodesic is called diffractive if goes through at least one diffractive singularity (see Figure \ref{SomeOtherOrbits}). {A geodesic \textit{geometrically diffracts} at a diffractive conic point with angle $\alpha$} if it is the limit of a family of non-diffractive geodesics (see Figure \ref{h}), which happens when $\theta_{out}-\theta_{in} = \pm \pi \; \text{mod} \; \alpha \Z$.  All the above are defined similarly on polygons except that geodesics reflect on the edges according to Snell's law.   The following wavefront relations hold (\cite{ChTa1, ChTa2}) for the integral kernels of the propagators $e^{i t\sqrt{\Delta_{2P}}}$ and $e^{i t\sqrt{\Delta^B_{P}}}$ 

\begin{equation} \label{WF2P}
\text{{WF}}^{\prime} e^{i t\sqrt{\Delta_{2P}}} \subset \left \{ (t, \tau, x, \xi, y, \eta ) \in T^*(\R \times 2P \times 2P); \, \tau=|\xi|,  \,   \Phi_{2P}^t(x, \xi)=(y,\eta) \right \},
\end{equation} 

\begin{equation} \label{WFDN}
\text{{WF}}^{\prime} e^{i t\sqrt{\Delta^B_{P}}} \subset \left \{ (t, \tau, x, \xi, y, \eta ) \in T^*(\R \times P \times P); \, \tau=|\xi|,  \,   \Phi_P^t(x, \xi)=(y,\eta) \right \},
\end{equation} 
where $\Phi_{2P}$ and $\Phi_P$ are geodesics flows on $2P$ and $P$, where geodesics are defined above.   Consequently, $\mathcal L_{2P}$ and $\mathcal L_P$ are defined to be the lengths of closed geodesics, where geodesics follow the above rules of diffractions. 

\begin{remark}
We note that $\mathcal L_{2P} \subset \mathcal L_{P}$ but they are not necessarily equal. For example when $P$ is a tall trapezoid, an easy observation shows that the length of the orthic triangle (see Figure \ref{orthic}) is in $\mathcal L_{P}$ but it is not in $\mathcal L_{2P}$.
\end{remark}

\subsubsection{Singularities of wave trace on polygons.} Since the involution $\mathcal R$ commutes with $\Delta_{2P}$, there is an orthonormal basis (ONB) consisting of eigenfunctions of both operators, $\mathcal R$ and $\Delta_{2P}$. The eigenvalues of $\mathcal R$ are $\pm 1$, and hence the joint eigenfunctions of $ \mathcal R$ and $\Delta_{2P}$ are even and odd eigenfunctions  of $\Delta_{2P}$ with respect to $\mathcal R$. The even eigenfunctions of $\Delta_{2P}$ correspond to the eigenfunctions of the Neumann Laplacian on $P$, $\Delta^N_P$, and the odd eigenfunctions correspond to the ones of Dirichlet Laplacian, $\Delta^D_P$.  It is now clear that counting multiplicities we have 
$$ \text{Spec} \Delta_{2P}= \text{Spec} \Delta^D_{P} \cup \text{Spec} \Delta^N_{P},$$
and therefore 
\begin{equation} \label{DNtrace}
\Tr e^{i t\sqrt{\Delta_{2P}}}=\Tr e^{i t\sqrt{\Delta^D_{P}}}+\Tr e^{i t\sqrt{\Delta^N_{P}}}.
\end{equation}
This in particular shows that 
$$\text{SingSupp}\Tr e^{i t\sqrt{\Delta_{2P}}} \subset \text{SingSupp}\Tr e^{i t\sqrt{\Delta^D_{P}}} \bigcup \text{SingSupp}\Tr e^{i t\sqrt{\Delta^N_{P}}} .$$
\begin{remark}Again, this inclusion is not an equality because for example the length of the orthic triangle in a tall trapezoid belongs to the singular support of both traces on the right hand side but it does not belong to the singular support of $\Tr e^{i t\sqrt{\Delta_{2P}}}$. In fact what happens in this case is that the singularities of Dirichlet and Neumann wave traces (at the length of orthic triangle) cancel each other on the right hand side of \ref{DNtrace} (see Proposition \ref{OrthicSingularity}). \end{remark}

\begin{remark} \label{HeatTraceNeumann}
We point out that a similar relationship holds between the Dirichlet and Neumann heat traces and the heat trace of $2P$ as a ESCS. More precisely,
$$ \Tr e^{ -t{\Delta_{2P}}}=\Tr e^{ -t{\Delta^D_{P}}}+\Tr e^{ -t{\Delta^N_{P}}}.$$ The asymptotic expansion (\ref{HeatTrace}) was proved in \cite{vdBeSr} for the Dirichlet heat trace. We have not found a reference in literature stating the asymptotic \ref{HeatTrace} for the Neumann heat trace, however it follows immediately from the above identity and the asymptotic expansion $$\Tr e^{ -t{\Delta_{2P}}} = \frac{|P|}{2 \pi t}  +
\sum_{k=1} ^n \frac{\pi^2 - \theta_k ^2}{12 \pi \theta_k} +
O(e^{-\frac{c}{t}}), \qquad t \to 0^+, $$ proved by Kokotov (see Theorem 1 of \cite{Ko}) and Fursaev \cite{Fu}. 
\end{remark}

\begin{figure}\begin{center}
\includegraphics[width=200pt]{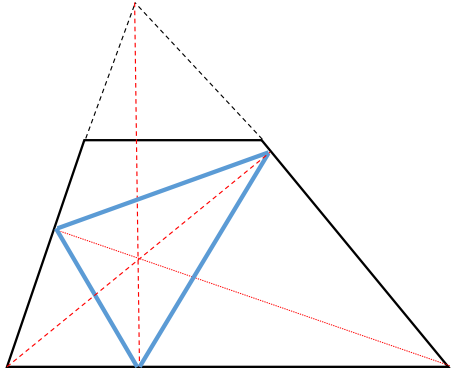} \caption{The
blue orbit corresponds to the \textit{orthic triangle}, whose
vertices are the feet of the heights
of the triangle that extends a trapezoid. It does not exists if
the trapezoid is too short or if
it is \textit{acute}: $\alpha+\beta \leq \frac{\pi}{2}$. Its
linearized Poincare map is $-I$,
hence it is a non-degenerate orbit, and by a result of
Guillemin-Melrose \cite{GuMe} it contributes a
singularity of order $\frac{1}{2}^+ $ to both Dirichlet and Neumann wave traces if
there are no other periodic orbits of
the same length with the same order of singularity. } \label{orthic}
\end{center} \end{figure} 

A common way to measure the singularity of a tempered distribution is to study the decay and growth properties of its local Fourier transform (smoothed resolvent). The following propositions are crucial to proving our inverse problems for trapezoids.

\begin{prop} \label{height}
Let $T$ be a trapezoid that is not a rectangle.  Suppose there are no other closed geodesics in $T$ of length $2h$, or arbitrarily close to $2h$, other than the one-parameter family in Figure \ref{h}. Let $\hat \rho(t) \in C^\infty_0(\R)$ be a cutoff function supported near $t=2h$ whose support  does not contain any lengths in $\{0\} \cup \pm \mathcal L_T$ other than $2h$. Then as $k \to +\infty$
$$ \int \hat \rho(t) e^{-ikt}\, \text{Tr}\, e^{i t\sqrt{\Delta^B_{T}}} dt = \frac{e^{i\pi/4}e^{-2ihk} }{\sqrt{4 \pi h}} \hat \rho(2h) A(R) k^{\frac{1}{2}} + o(k^{\frac{1}{2}}),$$
where $B=D\, \text{or}\, N$, and $A(R)$ is the area of the inner rectangle of $T$.
\end{prop}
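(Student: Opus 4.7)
The plan is to apply the method of reflections to relate the wave trace on $T$ to that of the doubled trapezoid $2T$ (a compact ESCS), and then invoke Hillairet's trace formula for a one-parameter family of regular closed geodesics with geometrically diffractive endpoints. I would first write the Schwartz kernel of $e^{it\sqrt{\Delta^B_T}}$ as
\[
K^B_T(t,x,y) \;=\; \tfrac{1}{2}\bigl(K_{2T}(t,x,y) + \epsilon_B\, K_{2T}(t,x,\mathcal R y)\bigr), \qquad \epsilon_N = +1,\ \epsilon_D = -1,
\]
where $K_{2T}$ is the Schwartz kernel of $e^{it\sqrt{\Delta_{2T}}}$. Integration along the diagonal in $T$ splits the wave trace into a \emph{direct} piece $\tfrac{1}{2}\Tr e^{it\sqrt{\Delta_{2T}}}$ and a \emph{reflected} piece $\tfrac{\epsilon_B}{2}\int_T K_{2T}(t,x,\mathcal R x)\,dx$. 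The goal is to show that the $k^{1/2}$ leading behavior comes entirely from the direct piece; since that piece is $\epsilon_B$-independent, this forces the same leading coefficient for $B=D$ and $B=N$.

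For the direct piece, the family of Figure \ref{h} in $T$ lifts to a one-parameter family of closed geodesics in $2T$ of length $2h$: each orbit is a straight line that exits $T$ through the top edge, traverses $T'$, re-enters $T$ through the bottom, and closes up. Its parameter is the horizontal coordinate, which ranges over an interval of length $b$. The two boundary orbits of the family are geometrically diffractive, passing through the top vertices of $T$ (cone angles $2(\pi-\alpha)$ and $2(\pi-\beta)$ in $2T$). By the hypothesis on $\mathcal L_T$ and the inclusion $\mathcal L_{2T}\subset\mathcal L_T$, no other closed geodesics of length near $2h$ exist in $2T$. Hillairet's trace formula \cite{Hi} for a one-parameter family of regular geodesics with geometrically diffractive endpoints then yields the leading singularity of $\Tr e^{it\sqrt{\Delta_{2T}}}$ at $t=2h$. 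Tracking the standard Maslov factor $e^{i\pi/4}$, the length-dependent normalization $(2\pi L)^{-1/2}$ with $L=2h$, and the swept area $2A(R)$ (the rectangle of width $b$ and height $2h$ in $2T$, halved by the $\tfrac{1}{2}$ from the reflection formula), one arrives at the stated leading term $\tfrac{e^{i\pi/4}e^{-2ihk}}{\sqrt{4\pi h}}\hat\rho(2h)A(R)k^{1/2}$.

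For the reflected piece, the wavefront relation \eqref{WF2P} implies that the singular support of $\int_T K_{2T}(t,x,\mathcal R x)\,dx$ at $t\approx 2h$ is produced by geodesics in $2T$ from $x$ to $\mathcal R x$ of length near $2h$. Unfolding, for an interior point $x=(x_0,y_0)$ the shortest such geodesics have lengths $2y_0$ and $2(h-y_0)$; the locus where a twisted orbit has length exactly $2h$ is at most a lower-dimensional subset of $T$, which cannot produce a $k^{1/2}$ contribution. To make this rigorous in a neighborhood of the geometrically diffractive boundary orbits, I would substitute Hillairet's FIO representation of the Cheeger--Taylor parametrix for $e^{it\sqrt{\Delta_{2T}}}$ into the reflected integral and run stationary phase, verifying that no critical point is supplied by the bouncing-ball family. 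Adding the two contributions then gives the proposition.

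The main obstacle is the reflected-piece estimate: while the soft wavefront/unfolding argument rules out any full-dimensional twisted family of length $2h$, the geometrically diffractive boundary orbits of the family do pass through the diffractive vertices of $2T$, so excluding a hidden diffractive contribution of order $k^{1/2}$ requires the explicit FIO parametrix of \cite{Hi} rather than a purely microlocal softness argument.
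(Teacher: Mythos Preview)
Your overall strategy---split into direct and reflected pieces via Corollary~\ref{DNtraces}, invoke Hillairet's trace formula for the direct piece, and show the reflected piece is lower order---is exactly the paper's approach. The direct-piece discussion is fine.

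Where you diverge from the paper is in the treatment of the reflected term, and here you make your life harder than necessary. You argue by dimension-counting (``lower-dimensional subset'') and then flag the diffractive endpoints as a genuine obstacle requiring the FIO parametrix. In fact the paper dispenses with the reflected piece by a pure wavefront argument that yields $O(k^{-\infty})$, not merely $o(k^{1/2})$, and needs no parametrix at all. The key observation you are missing is this: \emph{every} geodesic in $2T$ from $x$ to $\mathcal R x$---diffractive or not---projects under $\pi:2T\to T$ to a \emph{closed} billiard trajectory in $T$ of the same length. By the hypothesis of the proposition, any such trajectory of length in $(2h-\epsilon,2h+\epsilon)$ must be one of the altitude bouncing balls, hence has length exactly $2h$. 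But each altitude bouncing ball reflects twice (once on the top edge, once on the base), so its lift to $2T$ is a genuinely \emph{closed} geodesic: $\Phi^{2h}_{2T}(x,\xi)=(x,\xi)$. Combined with the wavefront condition $\Phi^{2h}_{2T}(x,\xi)=(\mathcal R x,\xi)$ this forces $x=\mathcal R x$, i.e.\ $x\in\partial T$; replacing $(x,\xi)$ by an interior point on the same orbit gives a contradiction. Hence $\mathrm{WF}\bigl(\int_{2T}U_{2T}(t,x,\mathcal R x)\,dx\bigr)\cap\{t\in(2h-\epsilon,2h+\epsilon)\}=\emptyset$.

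So your ``main obstacle'' is not an obstacle: the diffractive boundary orbits are already covered by the projection argument, and the FIO parametrix of \cite{Hi} is not needed here (it \emph{is} needed for Proposition~\ref{TopEdge}, where the relevant orbit $\gamma_b$ lies entirely on $\partial T$ and the argument above breaks down). Your proposal would eventually work, but it proves less ($o(k^{1/2})$ versus $O(k^{-\infty})$) with more machinery.
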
 

\begin{cor} If the conditions of Proposition \ref{height} are satisfied then $2h$ and $A(R)=bh$, the area of the inner rectangle of $T$, are spectral invariants.
\end{cor}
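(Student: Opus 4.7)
The plan is to read off $2h$ and $A(R)$ from the large-$k$ asymptotic of a localized Fourier transform of the wave trace. The wave trace $\Tr e^{it\sqrt{\Delta_T^B}} = \sum_j e^{it\sqrt{\lambda_j}}$ is a tempered distribution depending only on the eigenvalues, so for every $\hat\rho \in C_0^\infty(\R)$ the function
\[
F_\rho(k) := \int \hat\rho(t)\, e^{-ikt}\, \Tr e^{it\sqrt{\Delta_T^B}}\, dt,
\]
together with its large-$k$ asymptotic, is computable from the spectrum.

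Under the hypothesis, I would choose $\hat\rho \in C_0^\infty(\R)$ supported in a small neighborhood of $2h$ meeting $\pm \mathcal L_T \cup \{0\}$ only at $\{2h\}$, with $\hat\rho(2h) = 1$. Proposition \ref{height} then yields
\[
F_\rho(k) = \frac{e^{i\pi/4}}{\sqrt{4\pi h}}\, e^{-2ihk}\, A(R)\, k^{1/2} + o(k^{1/2}), \qquad k \to +\infty.
\]
Since $F_\rho$ is a spectral invariant, so is this asymptotic. From it I would first recover $2h$ as the oscillation frequency of the leading term, and then $A(R)$ from the modulus of the leading amplitude $\lim_{k\to\infty} k^{-1/2}|F_\rho(k)| = \tfrac{1}{\sqrt{4\pi h}}\, A(R)$, now that $h$ is known.

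The only delicate point is justifying that $2h$ is unambiguously recoverable from an asymptotic of the form $C\,k^{1/2}\, e^{-2ihk} + o(k^{1/2})$ with $C \neq 0$. This is a standard feature of such expansions: one can extract $2h$ from, for example, the phase-derivative of $k^{-1/2} F_\rho(k)$. Here the hypothesis that no other closed geodesics in $T$ have length at or near $2h$ ensures that no competing oscillatory mode $e^{-2ih'k}$ can contaminate the leading term, so both $2h$ and $A(R) = bh$ are fully determined by the spectrum.
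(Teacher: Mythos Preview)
Your argument is correct and is precisely the reasoning the paper has in mind: the corollary is stated immediately after Proposition~\ref{height} without a separate proof, being treated as a direct consequence of the asymptotic formula together with the fact that the wave trace (hence $F_\rho$ for every test function) is determined by the spectrum. Your extraction of $2h$ from the oscillatory factor and of $A(R)$ from the modulus of the leading coefficient is exactly the intended mechanism; the only cosmetic point is that choosing $\hat\rho$ with $\hat\rho(2h)=1$ presupposes knowing $2h$, but this is harmless since the entire singular support of the wave trace is spectrally determined, so one may first locate the isolated singularity and then localize around it.
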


\begin{remark}  The above proposition, with $A(R)$ replaced by $2A(R)$, was proved by Hillairet \cite{Hi} for the trace of the wave group of $\Delta_{2T}$. In fact it was proved in a more general context, namely for ESCS and for any one-parameter family of regular periodic geodesics whose boundaries are one or two geometrically diffractive geodesics. Hence it immediately applies to the double of the trapezoid in Figure \ref{h}. However, recalling (\ref{DNtrace}), it does not immediately imply anything about the asymptotics of the traces of the wave groups associated to $\Delta^D_{T}$ or $\Delta^N_{T}$. We emphasize that Hillairet's theorem implies that if both Dirichlet and Neumann spectrum are known, then $2h$ and $A(R)$ are spectral invariants.  We do not wish to make this strong assumption, but we will nonetheless show using the method of reflections and a wavefront calculation  that the Dirchlet and Neumann wave traces have an identical singularity at $t=2h$, showing that indeed Proposition \ref{height} follows from Hillairet's result. Note that this is special for the orbits in Figure \ref{h} and does not necessarily hold for other orbits. For example as we will see in Proposition \ref{TopEdge}, the orbit in Figure \ref{b} contributes a singularity at $t=2b$ to the trace of the Neumann wave group which is larger than the singularity at $t=2b$ of the trace of the Dirichlet wave group.
\end{remark}

\begin{remark} \label{order} We note that as $k \to -\infty$ we have
$$\int \hat \rho(t) e^{-ikt}\, \text{Tr}\, e^{i t\sqrt{\Delta^B_{P}}} dt =O(|k|^{-\infty}).$$ This is because by the Fourier inversion formula
$$\int \hat \rho(t) e^{-ikt}\, \text{Tr}\, e^{i t\sqrt{\Delta^B_{P}}}dt= 2\pi \text{Tr}\, \rho \left( \sqrt{\Delta^B_{P}} -k \right).$$ However since $\rho$ is rapidly decaying near infinity, and since by the Weyl's law the eigenvalues grow linearly in dimension $2$, the trace $\text{Tr}\, \rho (\sqrt{\Delta^B_{P}} -k )$ decays rapidly as $k \to -\infty$.  This together with proposition \ref{height} shows that locally near $t=2h$, the wave trace $\text{Tr}\,e^{i t\sqrt{\Delta^B_{T}}}$ belongs to the Sobolev spaces $H^{-s}(\R)$ for all $s>1$ but does not belong to $H^{-1}(\R)$,  and for this reason we say that $t=2h$ is a singularity of order $1^+$. In general if $\text{Tr}\, e^{i t\sqrt{\Delta}}$ has an isolated singularity at $t=t_0$ and if for some $\hat \rho$ supported near $t_0$ we have as $k \to +\infty$
$$ \int \hat \rho(t) e^{-ikt}\, \text{Tr}\, e^{i t\sqrt{\Delta}} dt = c e^{-ikt_0} k^{a} + o(k^{a}),$$
for some $a \in \R$ and nonzero constant $c$, then $t_0$ is a singularity of order $(a+\frac{1}{2})^+$ meaning that near $t=t_0$ the wave trace belongs to $H^{-s}(\R)$ for all $s > a+\frac{1}{2}$ but does not for $s=a+\frac{1}{2}$. 
\end{remark}

\begin{remark}
The discussion in remark \ref{order} also shows that as $k \to +\infty$ 
$$ \int \hat \rho (t) e^{-ikt} \text{Tr}\,  \cos ( t \sqrt{\Delta}) dt = \frac{1}{2} \int \hat \rho (t) e^{-ikt} \text{Tr}\, e^ {it \sqrt{\Delta}} dt + O(k^{-\infty}).$$ This shows a relation between $\Tr \cos ( t \sqrt{\Delta})$ and $\Tr e^ {it \sqrt{\Delta}}$. 
\end{remark}

The next proposition concerns the diffractive orbit $\gamma_b$ in Figure \ref{b}.

\begin{prop} \label{TopEdge}
Let $T$ be a trapezoid with $\alpha \neq \frac{\pi}{2}$ and $\beta \neq \frac{\pi}{2}$ .  Suppose there are no closed geodesics in $T$ of length $2b$, or arbitrarily close to $2b$, other than $\gamma_b$ in Figure \ref{b}. Let $\hat \rho(t) \in C^\infty_0(\R)$ be a cutoff function supported near $t=2b$ whose support  does not contain any lengths in $\{0\} \cup \pm \mathcal L_T$ other than $2b$. Then as $k \to +\infty$
$$\quad \int \hat \rho(t) e^{-ikt}\, \text{Tr}\, e^{i t\sqrt{\Delta^B_{T}}} dt = -\pi i \hat \rho(2b) e^{-2bki}   C_{\alpha, \beta}  k^{-1} + O(k^{-2}),$$ for $B=\text{Neumann}$, and
$$\quad \int \hat \rho(t) e^{-ikt}\, \text{Tr}\, e^{i t\sqrt{\Delta^B_{T}}} dt = O(k^{-2}),$$ for $B=\text{Dirichlet}$. Here the constant $C_{\alpha, \beta}$ is given by
\begin{equation}\label{C1} C_{\alpha, \beta}= \frac{ \cot(\frac{\pi^2}{2\pi-2\alpha})\cot(\frac{\pi^2}{2\pi-2\beta})}{(\pi-\alpha)(\pi-\beta)}. \end{equation}


When $\alpha= \frac{\pi}{2}$ and $\beta \neq \frac{\pi}{2}$, as $k \to \infty$ we have $$\quad \int \hat \rho(t) e^{-ikt}\, \text{Tr}\, e^{i t\sqrt{\Delta^B_{T}}} dt =  (2\pi b)^{\frac{1}{2}} \hat \rho(2b) e^{-\frac{\pi i}{4}} e^{-2bki}   C_\beta k^{-\frac{1}{2}} + O(k^{-\frac{3}{2}}),$$ for $B=\text{Neumann}$, and
$$\quad \int \hat \rho(t) e^{-ikt}\, \text{Tr}\, e^{i t\sqrt{\Delta^B_{T}}} dt = O(k^{-\frac{3}{2}}),$$ for $B=\text{Dirichlet}$. Here 
\begin{equation}\label{C2} C_{\beta}= -\frac{ \cot(\frac{\pi^2}{2\pi-2\beta})}{\pi-\beta}. \end{equation}

\end{prop}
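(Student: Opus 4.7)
The plan is to combine the method of reflections with Hillairet's two main tools: the trace formula for isolated diffractive geodesics and the FIO realization of the Cheeger-Taylor parametrix. Write $\mathcal R:2T\to 2T$ for the canonical involution and $E_{2T}(x,y,t)$ for the Schwartz kernel of $e^{it\sqrt{\Delta_{2T}}}$. The method of reflections gives
\begin{equation*}
\Tr e^{it\sqrt{\Delta^{N/D}_T}} \;=\; \tfrac12\,\Tr e^{it\sqrt{\Delta_{2T}}} \;\pm\; I_R(t),\qquad I_R(t):=\int_T E_{2T}(x,\mathcal R x,t)\,dx.
\end{equation*}
Equivalently, $2I_R(t)=\Tr(e^{it\sqrt{\Delta_{2T}}}\mathcal R)=\Tr e^{it\sqrt{\Delta^N_T}}-\Tr e^{it\sqrt{\Delta^D_T}}$, so understanding the singularity of $I_R$ at $t=2b$ is what drives the Dirichlet/Neumann splitting.

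Assume first $\alpha,\beta\ne\pi/2$. In $2T$ the orbit $\gamma_b$ lifts to a closed, isolated, purely diffractive geodesic of length $2b$ running along the interior segment that joins the two top conic singularities (of cone angles $2(\pi-\alpha)$ and $2(\pi-\beta)$), with backward diffraction (angle change $\pi$) at each endpoint. Hillairet's trace formula for isolated diffractive orbits then gives the principal singularity of $\Tr e^{it\sqrt{\Delta_{2T}}}$ at $t=2b$: the coefficient is the product of the Cheeger-Taylor backward diffraction coefficients at the two endpoints, each of the form $\tfrac{1}{2(\pi-\alpha)}\cot\!\bigl(\tfrac{\pi^2}{2(\pi-\alpha)}\bigr)$, and up to the universal normalization this yields
\begin{equation*}
\int\hat\rho(t)\,e^{-ikt}\,\tfrac12\Tr e^{it\sqrt{\Delta_{2T}}}\,dt \;=\; -\pi i\,\hat\rho(2b)\,e^{-2bki}\,C_{\alpha,\beta}\,k^{-1}+O(k^{-2}),
\end{equation*}
with $C_{\alpha,\beta}$ as in \eqref{C1}.

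For the reflected term $I_R(t)$ I substitute Hillairet's FIO representation of the Cheeger-Taylor parametrix for $E_{2T}$ in a conic microlocal neighborhood of $\gamma_b$ and evaluate $\int_T E_{2T}(x,\mathcal R x,t)\,dx$ by stationary phase. The geometric input is that $\gamma_b$ lies along the former top edge of $T$, which sits inside the fixed-point set of $\mathcal R$, and the orbit momentum is tangent to this edge, so $\mathcal R_*$ acts trivially on the tangential covectors along $\gamma_b$. It follows that the critical set for the twisted trace $\Tr(e^{it\sqrt{\Delta_{2T}}}\mathcal R)$ coincides with that of the untwisted trace, with identical linearized conditions at the critical set. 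Careful tracking of the principal symbol, the Maslov factor, and the half-density contribution in the FIO parametrix then shows that $I_R(t)$ has the same principal singularity at $t=2b$ as $\tfrac12\Tr e^{it\sqrt{\Delta_{2T}}}$. Substituting into the decomposition above produces the stated doubling for Neumann and the cancellation to order $O(k^{-2})$ for Dirichlet.

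The case $\alpha=\pi/2$ requires a modification: the top-left vertex has cone angle $\pi=2\pi/2$ in $2T$ and is therefore non-diffractive, so $\gamma_b$ is no longer purely diffractive but sits as the geometrically diffractive boundary of a one-parameter family of regular orbits of length $2b$ in $2T$ (entering on one side of the now-reflecting top-left vertex). In this regime Hillairet's alternative trace formula for such families (the same one used in Proposition \ref{height}) produces a stronger singularity of order $k^{-1/2}$, with principal coefficient built from the single remaining diffraction coefficient at the $(\pi-\beta)$-vertex, giving the $C_\beta$ of \eqref{C2}; the reflected-term analysis of Step 3 goes through unchanged. The main difficulty is Step 3: implementing the FIO stationary phase with enough precision to verify that the direct and reflected principal parts agree \emph{exactly}, since any spurious factor would destroy the Dirichlet cancellation and swap the roles of the two boundary conditions.
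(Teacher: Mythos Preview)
For the main case $\alpha,\beta\neq\pi/2$ your approach is essentially the paper's: decompose via reflections, invoke Hillairet's trace formula for $\Tr e^{it\sqrt{\Delta_{2T}}}$, and use Hillairet's FIO parametrix to show that the reflected integral $\int U_{2T}(t,x,\mathcal Rx)\,dx$ agrees with $\Tr e^{it\sqrt{\Delta_{2T}}}$ to leading order. The paper makes your symmetry observation concrete by noting that in polar coordinates about the conic endpoints one has $R_1(\mathcal Rx)=R_1(x)$ and $\Theta(\mathcal Rx)=\Theta(x)$, so the phase and principal amplitude of the parametrix are literally $\mathcal R$-invariant; stationary phase then finishes. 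Two minor points you gloss over: (i) the parametrix is only valid away from the conic points, and the paper uses cyclicity of the trace (as in \cite{Du,Hi,FoHaHi}) to move the cutoff away from $C_1,C_2$; (ii) your normalization is off by a factor of two --- it is $\Tr e^{it\sqrt{\Delta_{2T}}}$, not half of it, that carries the coefficient $-\pi i\,\hat\rho(2b)\,C_{\alpha,\beta}$, which is exactly what makes the Neumann trace come out as stated after adding the reflected term.

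Your treatment of $\alpha=\pi/2$, however, has a genuine gap. The top-left vertex in $2T$ then has cone angle $\pi$ and a geodesic entering it is reflected straight back, but nearby geodesics do \emph{not} close up as regular periodic orbits of length $2b$: the right leg is not perpendicular to the top edge (since $\beta<\pi/2$), so there is no one-parameter family of regular orbits with $\gamma_b$ as boundary. Indeed, invoking the family formula of Proposition~\ref{height} would produce a singularity of order $k^{1/2}$, not the asserted $k^{-1/2}$. The correct picture, and the paper's, is that $\gamma_b$ remains an \emph{isolated} diffractive closed orbit, now with a single diffraction ($n=1$) at the $(\pi-\beta)$-vertex and a non-diffractive passage through the other; Hillairet's isolated-orbit formula with $n=1$ then gives the $k^{-1/2}$ leading term with coefficient $C_\beta$, and the reflected-term analysis goes through exactly as in the two-diffraction case.
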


As a quick corollary we obtain a new angle invariant. 
\begin{cor}\label{NewAngleInvariant} Let $T$ be a trapezoid such that there no orbits of length $2b$ other than the bouncing ball orbit $\gamma_b$ corresponding to the top edge. If $\beta \leq  \alpha < \frac{\pi}{2}$, then $2b$ and $C_{\alpha, \beta}$ defined by \ref{C1} are spectral invariants for the Neumann spectrum. If $\beta < \alpha = \frac{\pi}{2}$, then $2b$ and $C_\beta$ defined by \ref{C2} are spectral invariants for the Neumann spectrum.

\end{cor}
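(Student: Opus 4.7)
The plan is to deduce the corollary directly from Proposition \ref{TopEdge}. Since the Neumann eigenvalues of $T$ determine the tempered distribution $\Tr\,e^{it\sqrt{\Delta_T^N}}$, every continuous linear functional of this distribution is a spectral invariant. The strategy is to isolate the contribution of the diffractive orbit $\gamma_b$ via a cutoff near $t=2b$, apply Proposition \ref{TopEdge}, and read off both $2b$ and $C_{\alpha,\beta}$ (or $C_\beta$) from the resulting large-frequency asymptotic.

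First, I would choose $\hat\rho \in C_0^\infty(\R)$ supported in a neighborhood of $2b$ disjoint from $(\{0\}\cup\pm\mathcal L_T)\setminus\{2b\}$ and satisfying $\hat\rho(2b)\ne 0$; such a neighborhood exists by the isolation hypothesis on $2b$. The pairing
\[
I_{\hat\rho}(k) := \int \hat\rho(t)\,e^{-ikt}\,\Tr\,e^{it\sqrt{\Delta_T^N}}\,dt
\]
depends only on the Neumann spectrum of $T$. Proposition \ref{TopEdge} then supplies its leading large-$k$ asymptotic: in the case $\beta\le\alpha<\pi/2$,
\[
I_{\hat\rho}(k) = -\pi i\,\hat\rho(2b)\,C_{\alpha,\beta}\,e^{-2ibk}\,k^{-1} + O(k^{-2}),
\]
and in the case $\beta<\alpha=\pi/2$,
\[
I_{\hat\rho}(k) = (2\pi b)^{1/2}\,\hat\rho(2b)\,C_\beta\,e^{-i\pi/4}\,e^{-2ibk}\,k^{-1/2} + O(k^{-3/2}).
\]

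Next I would extract the invariants from this asymptotic. The polynomial order of $|I_{\hat\rho}(k)|$ as $k\to+\infty$ distinguishes the two cases; the oscillation frequency of the leading term fixes $2b$ (alternatively, by comparing $I_{\hat\rho}$ for two distinct translates or shapes of $\hat\rho$ with the same support); and once $2b$ and $\hat\rho(2b)$ are known, dividing out the explicit universal prefactor recovers $C_{\alpha,\beta}$, respectively $C_\beta$. For this reading to be valid the leading coefficient must be nonzero. Here one checks: for $\alpha,\beta\in(0,\pi/2)$, $\pi^2/(2\pi-2\alpha)\in(\pi/2,\pi)$, on which $\cot$ is finite and nonvanishing, and similarly for $\beta$; an analogous verification handles $C_\beta$ when $\alpha=\pi/2$, $\beta<\pi/2$. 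Combined with $\hat\rho(2b)\ne0$, this ensures that the leading term of $I_{\hat\rho}(k)$ is genuinely present and not swallowed by the remainder.

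The main substance of the argument has already been absorbed into Proposition \ref{TopEdge}: the hard part is identifying the order and coefficient of the diffractive singularity of the Neumann wave trace at $t=2b$, which relies on the Cheeger--Taylor parametrix and its FIO formulation by Hillairet, together with the method-of-reflections analysis separating the Neumann contribution from the Dirichlet one. Given that proposition, the present corollary is the routine step of reading off numerical data from an isolated singularity of the wave trace distribution, with the only subtlety being the nonvanishing check above.
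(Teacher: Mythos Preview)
Your proposal is correct and matches the paper's treatment: the paper states this result as an immediate corollary of Proposition~\ref{TopEdge} without a separate proof, and your argument---reading off $2b$ and the nonzero leading coefficient from the spectrally determined asymptotic of the localized Neumann wave trace---is exactly the intended deduction. Your nonvanishing check for $C_{\alpha,\beta}$ and $C_\beta$ (that $\pi^2/(2\pi-2\alpha)\in(\pi/2,\pi)$ for $\alpha\in(0,\pi/2)$, where $\cot$ is finite and strictly negative) is the only point worth making explicit, and you did so.
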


\begin{remark} Again, this proposition follows from Hillairet \cite{Hi} with required modifications to separate the Dirichlet and Neumann wave traces, which we will discuss in the proof.  
\end{remark}

\begin{remark}The following proposition might be useful when one studies the isospectral problem on trapezoids for the Dirichlet Laplacian. It concerns the wave trace contribution of the orthic orbit in Figure \ref{orthic}. Up to the principal part, it is a direct consequence of Guillemin-Melrose trace formula  \cite{GuMe} for simple and non-degenerate periodic orbits. We state the proposition without the proof because we do not use it in this paper. In the following we use $l_F$ for the length of the orthic (also called Fagnano) triangle which by the notations of Figure \ref{trap} equals $2B \sin \alpha \sin \beta$. 
\end{remark}

\begin{prop} \label{OrthicSingularity}
Let $T$ be a trapezoid with $\alpha \, \text{and}\, \beta \neq \frac{\pi}{2}$, and $ \alpha+\beta > \frac{\pi}{2}$. Suppose $T$ is tall enough that the orthic (Fagnano) triangle lies in $T$ and is non-diffractive as in Figure \ref{orthic}.  Suppose there are no other closed geodesics in $T$ of length $l_F$, other than the orthic triangle. Let $\hat \rho(t) \in C^\infty_0(\R)$ be a cutoff function supported near $t=l_F$ whose support  does not contain any lengths in $\{0\} \cup \pm \mathcal L_T$ other than $l_F$. Then as $k \to +\infty$
$$ \int \hat \rho(t) e^{-ikt}\, \text{Tr}\, e^{i t\sqrt{\Delta^B_{T}}} dt  \sim (-1)^{s_B} \,l_F\,e^{-ikl_F} \hat \rho(l_F) \sum_{j=0}^\infty {c_j}{k^{-j}},$$
where $s_B=0$ if $B=D$ and $s_B=1$ if $B=N$. The constant $c_0$ is nonzero.  Moreover, the constants $\{c_j\}_{j=0}^\infty$ depend only on $l_F$  and are independent of $B$. Hence, the invariants $\{c_j\}_{j=0}^\infty$ do not introduce any spectral invariants other than $l_F$.  
\end{prop}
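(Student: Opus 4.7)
The plan is to combine the method of reflections with the classical Guillemin--Melrose trace formula, exploiting the decomposition identity (\ref{DNtrace}) to extract both the sign structure $(-1)^{s_B}$ and the fact that the $c_j$ are independent of the boundary condition.

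First, the orthic orbit reflects transversally three times, off three smooth edges of $T$, so when lifted to the ESCS $2T$ it becomes a geodesic segment of length $l_F$ joining $x \in T$ to $\mathcal R(x) \in T'$ rather than a closed curve on $2T$. Consequently the orthic contributes no singularity to $\Tr e^{it\sqrt{\Delta_{2T}}}$ at $t = l_F$. By (\ref{DNtrace}), this forces $\Tr e^{it\sqrt{\Delta^D_T}} + \Tr e^{it\sqrt{\Delta^N_T}}$ to be smooth near $t=l_F$, so the Dirichlet and Neumann singularities at $l_F$ must be opposite. This already produces the factor $(-1)^{s_B}$.

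Second, I would apply the Guillemin--Melrose trace formula \cite{GuMe} to one of the two traces---say Neumann. The orthic is a primitive, simple, non-degenerate periodic billiard orbit of length $l_F$ disjoint from the vertices of $T$, reflecting transversally off three smooth edges, with Poincar\'e map $-I$ and hence $|\det(I - P_\gamma)| = 4$. Since $\gamma$ stays in the smooth part of $\partial T$, the standard G--M FIO parametrix applies microlocally near $\gamma$, and stationary phase yields the full polyhomogeneous asymptotic $e^{-ikl_F}\hat\rho(l_F)\, l_F \sum_{j \geq 0} c_j^B k^{-j}$, with nonzero principal coefficient $c_0^B$ coming from $|\det(I - P_\gamma)|^{-1/2}=1/2$ together with the Maslov factor $e^{i\sigma\pi/4}$ and the reflection sign $(+1)^3$ or $(-1)^3$ for Neumann or Dirichlet respectively.

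Finally, since the Dirichlet and Neumann FIO parametrices differ only by the sign of the reflected image at each transverse reflection, the two expansions differ by the overall factor $(-1)^3 = -1$; combined with step one this yields $c_j^D = -c_j^N$ and hence $c_j^B = (-1)^{s_B} c_j$ for a sequence $c_j$ independent of $B$. The $c_j$ depend only on $l_F$ because the remaining inputs to the G--M expansion---the Poincar\'e map $-I$, the number of reflections, the Maslov contribution, and the vanishing subprincipal symbol of the Euclidean Laplacian---are universal for orthic orbits in flat polygonal domains. The main obstacle is the careful bookkeeping of Maslov indices and reflection signs in the G--M stationary phase expansion at a boundary orbit with three reflections, but the cancellation identity from step one provides a useful consistency check on the sign conventions; no diffractive parametrix from \cite{Hi} is needed since $\gamma$ avoids the conical singularities of $2T$.
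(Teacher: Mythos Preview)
The paper explicitly omits the proof of this proposition, stating ``We state the proposition without the proof because we do not use it in this paper,'' and notes only that the principal part is a direct consequence of the Guillemin--Melrose trace formula. Your approach is precisely what the paper gestures at: apply G--M microlocally near the orthic orbit (which stays in the smooth part of $\partial T$) and use (\ref{DNtrace}) together with the observation that an orbit with an odd number of reflections lifts not to a closed geodesic on $2T$ but to a segment from $x$ to $\mathcal R x$---hence contributes only to the reflected term $\int_{2T} U_{2T}(t,x,\mathcal R x)\,dx$ in Corollary~\ref{DNtraces}, not to $\Tr U_{2T}$. This is exactly the cancellation the paper describes in the remark following (\ref{DNtrace}), and it cleanly yields the sign $(-1)^{s_B}$ and the boundary-condition independence of the $c_j$.

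One point deserves more care than your sketch provides: the claim that the higher-order $c_j$ depend only on $l_F$. Your list of inputs (Poincar\'e map $-I$, three reflections, Maslov data, vanishing subprincipal symbol) determines the principal coefficient, but lower-order stationary-phase terms generally involve higher jets of phase and amplitude at the critical set, and a priori the individual segment lengths and incidence angles of the orthic triangle vary among trapezoids sharing the same $l_F$. In the Euclidean case you can make this rigorous by unfolding reflections to obtain an \emph{exact} image-sum parametrix (the free propagator composed with the three edge reflections) rather than an asymptotic one; the resulting oscillatory integral then has a linear phase whose Hessian and higher derivatives are determined by the composite isometry $R_3R_2R_1$, which for the orthic orbit is always $-I$ on the normal bundle. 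Carrying out this computation, rather than invoking the general G--M expansion, is what actually pins down the full sequence $\{c_j\}$ in terms of $l_F$ alone.
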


\begin{cor} Under the conditions of Proposition \ref{OrthicSingularity}, $l_F=2B \sin \alpha \sin \beta$ is a spectral invariant for both Dirichlet and Neumann spectra. 

\end{cor}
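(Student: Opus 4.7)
The corollary is a formal deduction from Proposition~\ref{OrthicSingularity}, and my plan is to extract $l_F$ from the spectrum by identifying it via the oscillatory rate of the leading term of a suitable smoothed Fourier transform of the wave trace. The conceptual input is that the wave trace $\Tr e^{it\sqrt{\Delta^B_T}}$ is a tempered distribution assembled directly from the spectrum $\{\lambda_k\}$, so the asymptotic behavior as $k\to+\infty$ of every one of its cutoff-localized Fourier transforms is automatically a spectral invariant.

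Concretely, I would invoke Proposition~\ref{OrthicSingularity} to write, under its hypotheses on $T$,
\[
\int \hat\rho(t) e^{-ikt}\, \Tr e^{it\sqrt{\Delta^B_T}}\, dt \sim (-1)^{s_B}\, l_F\, \hat\rho(l_F)\, e^{-ikl_F} \sum_{j\ge 0} c_j k^{-j}, \qquad k\to +\infty,
\]
with $c_0 \neq 0$. The nontriviality of this leading term places $l_F$ in the singular support of the wave trace as a singularity of order $\tfrac{1}{2}^+$ (in the sense of Remark~\ref{order}); moreover, the oscillatory factor $e^{-ikl_F}$ in the leading $k^0$ term identifies the real number $l_F$ from the $k$-asymptotic. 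Since this extraction uses only the spectrum, two isospectral trapezoids that both satisfy the proposition's hypothesis will share the same value of $l_F$.

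The one subtlety to address is that a different periodic orbit in $T$ could conceivably produce an asymptotic of the same order $\tfrac{1}{2}^+$ at a distinct time that might be confused with $l_F$. This is the main obstacle in principle, but it is handled by the isolation hypothesis (the orthic triangle is the only closed orbit of length $l_F$) together with the fact that the other distinguished orbits of the trapezoid studied in this paper, namely the bouncing balls of Figures~\ref{b} and~\ref{h}, produce singularities of strictly different orders (respectively $1^+$ and at most $0^+$, by Propositions~\ref{height} and~\ref{TopEdge}). Hence $l_F = 2B\sin\alpha\sin\beta$ is recovered as a real number from either the Dirichlet or the Neumann spectrum, which is exactly the statement of the corollary.
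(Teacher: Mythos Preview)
The paper gives no proof of this corollary; it is stated immediately after Proposition~\ref{OrthicSingularity} (itself left unproved, since the paper does not use it) as a direct formal consequence. Your core deduction---that the wave trace $\Tr e^{it\sqrt{\Delta^B_T}}$ is built from the spectrum alone, and that Proposition~\ref{OrthicSingularity} with $c_0\neq 0$ exhibits a genuine singularity at $t=l_F$---is exactly the paper's implicit reasoning, and is correct.

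Your final paragraph, however, attempts something the paper does not, and that is where a gap appears. You try to isolate $l_F$ among \emph{all} wave-trace singularities by its order $\tfrac{1}{2}^+$, claiming this is handled because the bouncing balls of Figures~\ref{b} and~\ref{h} have different orders. But those two families are far from exhausting the periodic orbits of a trapezoid (cf.\ Figure~\ref{SomeOtherOrbits}), and by the very Guillemin--Melrose trace formula invoked in Proposition~\ref{OrthicSingularity}, \emph{any} simple non-degenerate transversally reflecting periodic orbit contributes a singularity of the same order $\tfrac{1}{2}^+$. So the order alone cannot single out $l_F$. (Incidentally you have the two bouncing balls interchanged: $\gamma_h$ of Figure~\ref{h} gives the $1^+$ singularity via Proposition~\ref{height}, while $\gamma_b$ of Figure~\ref{b} gives at most $0^+$ via Proposition~\ref{TopEdge}.) The corollary should be read in the standard weaker sense: $l_F$ lies in the singular support of the wave trace, and the singular support---indeed the full distributional wave trace---is determined by the spectrum. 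A stronger ``unique recovery'' statement would require additional input (e.g., an argument that $l_F$ is the shortest length with a given property, as the paper does for $2h$ and $2b$ in Proposition~\ref{WaveInvariants}), which neither you nor the paper provide here.
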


\section{Proofs of Propositions \ref{height} and \ref{TopEdge}}\label{ProofsOfProps}

Let $P$ be a polygonal domain and define $2P$ and the involution map $\mathcal R: 2P \to 2P$ as in the previous section. We denote

$$ U_{2P}(t) = e^{it \sqrt{ \Delta_{2P}}}, \quad  U^D_P(t) = e^{it \sqrt{ \Delta^D_P}},  \quad U^N_P(t) = e^{it \sqrt{ \Delta^N_P}},$$ and we use 
$$U_{2P}(t, x, y), \quad U^D_{P}(t, x, y), \quad U^N_{P}(t, x, y),$$ for their integral kernels. The following proposition expresses the Dirichlet and Neumann wave kernels in terms of $U_{2P}(t, x, y)$. 

\begin{prop} For all $t \in \R$ and all $x, y \in P$:
$$U^D_{P}(t, x, y)= U_{2P}(t, x, y) - U_{2P}(t, x, \mathcal R y),
$$
$$U^N_{P}(t, x, y)=   U_{2P}(t, x, y) + U_{2P}(t, x, \mathcal R y).
$$
\end{prop}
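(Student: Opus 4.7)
The plan is to carry out the classical method of reflections spectrally, using the joint diagonalization of $\Delta_{2P}$ and the involution $\mathcal R$ recorded earlier in the paper. First I would pick an orthonormal basis of $L^2(2P)$ of the form $\{\phi_j^+\}\cup\{\phi_j^-\}$, where each $\phi_j^{\pm}$ is an eigenfunction of $\Delta_{2P}$ with eigenvalue $\mu_j^{\pm}$ and satisfies $\phi_j^{\pm}(\mathcal R x)=\pm\phi_j^{\pm}(x)$. I would then appeal to the identification already stated in the excerpt: $\{\sqrt{2}\,\phi_j^+|_P\}$ is an ONB of Neumann eigenfunctions of $\Delta_P^N$ with eigenvalues $\mu_j^+$, and $\{\sqrt{2}\,\phi_j^-|_P\}$ is an ONB of Dirichlet eigenfunctions of $\Delta_P^D$ with eigenvalues $\mu_j^-$.

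Granted this, the remainder is a short algebraic manipulation of the spectral kernel. Writing
\begin{equation*}
U_{2P}(t,x,y)=\sum_j e^{it\sqrt{\mu_j^+}}\,\phi_j^+(x)\phi_j^+(y)+\sum_j e^{it\sqrt{\mu_j^-}}\,\phi_j^-(x)\phi_j^-(y),
\end{equation*}
as a distribution on $\R\times 2P\times 2P$, and using $\phi_j^\pm(\mathcal R y)=\pm\phi_j^\pm(y)$, I would obtain
\begin{equation*}
U_{2P}(t,x,y)+U_{2P}(t,x,\mathcal R y)=2\sum_j e^{it\sqrt{\mu_j^+}}\,\phi_j^+(x)\phi_j^+(y),
\end{equation*}
\begin{equation*}
U_{2P}(t,x,y)-U_{2P}(t,x,\mathcal R y)=2\sum_j e^{it\sqrt{\mu_j^-}}\,\phi_j^-(x)\phi_j^-(y).
\end{equation*}
For $x,y\in P$ the factor of $2$ is absorbed into the rescaled eigenfunctions $\sqrt{2}\,\phi_j^\pm|_P$, and the right-hand sides are precisely the spectral expansions of $U^N_P(t,x,y)$ and $U^D_P(t,x,y)$, yielding the two claimed identities.

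The main obstacle is not this formal computation but the rigorous identification of the $\mathcal R$-even Friedrichs eigenfunctions on $2P$ with Neumann eigenfunctions of $\Delta_P^N$, and of the $\mathcal R$-odd ones with Dirichlet eigenfunctions of $\Delta_P^D$, particularly at the conic vertices of $2P$. Away from the vertices this is standard: an $\mathcal R$-odd element of $H^1(2P)$ has vanishing trace on $\partial P$, giving the Dirichlet condition, while the $\mathcal R$-even part of the Friedrichs form on $2P$ restricts to the Neumann form on $P$. At the conic vertices one must verify that the Friedrichs extension on $2P$ decomposes compatibly with the $\pm 1$ eigenspaces of $\mathcal R$; this is standard for compact ESCS but does warrant care because of the competing self-adjoint extensions at conic points. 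Once this identification is in place, the identities hold as distributions on $\R\times P\times P$, and convergence of the eigenfunction sums is guaranteed by Weyl's law for ESCS.
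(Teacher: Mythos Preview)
Your proposal is correct and follows essentially the same approach as the paper, which simply states that the proof is ``obvious from the expansion of $U_{2P}(t, x, y)$ in terms of an ONB of eigenfunctions of $\Delta_{2P}$ consisting of even and odd eigenfunctions with respect to $\mathcal R$.'' Your treatment is in fact more careful than the paper's, since you flag the technical point about compatibility of the Friedrichs extension with the $\mathcal R$-decomposition at the conic vertices; the paper glosses over this entirely.
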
 

The proof is obvious from the expansion of $U_{2P}(t, x, y)$ in terms of an ONB of eigenfunctions of $\Delta_{2P}$  consisting of even and odd eigenfunctions with respect to $\mathcal R$.



As an immediate corollary we obtain:
\begin{cor} \label{DNtraces}
$$ \text{Tr} \, U^D_P(t) =\frac{1}{2} \left ( \text{Tr}\, U_{2P}(t) - \int _{2P} U_{2P}(t, x, \mathcal R x) dx \right) ,$$
$$ \text{Tr} \, U^N_P(t) =\frac{1}{2} \left ( \text{Tr}\, U_{2P}(t) + \int _{2P} U_{2P}(t, x, \mathcal R x) dx \right ).$$ 
\end{cor}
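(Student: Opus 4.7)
The plan is to derive the corollary as a direct consequence of the identities for $U^D_P$ and $U^N_P$ stated in the preceding proposition, by setting $y=x$, integrating, and exploiting the $\mathcal{R}$-symmetry of the double $2P$.

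First I would take the trace of each of the two kernel identities by setting $y=x$ and integrating over $P$:
\[
\operatorname{Tr} U^B_P(t) \;=\; \int_P U^B_P(t,x,x)\,dx \;=\; \int_P U_{2P}(t,x,x)\,dx \;\pm\; \int_P U_{2P}(t,x,\mathcal{R}x)\,dx,
\]
with the plus sign for $B=N$ and the minus sign for $B=D$. So it only remains to rewrite each of the two integrals on the right as an integral over all of $2P$.

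Next I would use the fact that the involution $\mathcal{R}$ commutes with $\Delta_{2P}$ (and hence with any functional calculus of $\sqrt{\Delta_{2P}}$), which gives the kernel symmetry
\[
U_{2P}(t,\mathcal{R}x,\mathcal{R}y) \;=\; U_{2P}(t,x,y).
\]
Using the decomposition $2P = P\cup \mathcal{R}(P)$ (with boundary overlap of measure zero) and changing variables $x\mapsto \mathcal{R}x$ on the second piece, this symmetry yields
\[
\int_{2P} U_{2P}(t,x,x)\,dx \;=\; 2\int_P U_{2P}(t,x,x)\,dx,
\qquad
\int_{2P} U_{2P}(t,x,\mathcal{R}x)\,dx \;=\; 2\int_P U_{2P}(t,x,\mathcal{R}x)\,dx,
\]
since $\mathcal{R}^2=\operatorname{Id}$ and $\mathcal{R}$ is an isometry, so the Jacobian factor is $1$. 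Recognizing the left-hand side of the first identity as $\operatorname{Tr} U_{2P}(t)$, I then substitute both identities into the trace formula above and divide by $2$; this yields precisely the two stated formulas for $\operatorname{Tr} U^D_P(t)$ and $\operatorname{Tr} U^N_P(t)$.

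There is essentially no obstacle: the only mild point to verify is the $\mathcal{R}$-invariance of the kernel, which is immediate from the joint eigenbasis of $\mathcal{R}$ and $\Delta_{2P}$ already used in the proof of the preceding proposition. Hence the corollary is a two-line computation once that proposition is in hand, and the write-up should be kept short.
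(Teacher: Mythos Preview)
Your argument is correct and is exactly the computation the paper has in mind; the paper simply states the corollary as ``an immediate corollary'' of the preceding proposition without writing out the details. Your use of the $\mathcal{R}$-invariance of $U_{2P}$ to double the integrals over $P$ into integrals over $2P$ is the natural (and only) step needed.
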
  
We now specialize to the case of a trapezoid. To prove Propositions \ref{height}, \ref{TopEdge}, and \ref{OrthicSingularity}, we use Corollary \ref{DNtraces} to reduce the problem to studying asymptotics of tempered distributions $\text{Tr}\, U_{2P}(t)$ and $\int _{2P} U_{2P}(t, x, \mathcal R x) dx $. Theorem {2}  of Hillairet \cite{Hi} gives the asymptotics of the trace $\text{Tr}\, U_{2P}(t)$, but the term  $\int _{2P} U_{2P}(t, x, \mathcal R x) dx$ is a new ingredient which is relatively easy to study. 

\begin{proof}[Proof of Proposition \ref{height}] Let $\epsilon >0$ such that there are no lengths other than $2h$ in the interval $(2h-\epsilon, 2h+\epsilon)$. To prove Proposition \ref{height} it suffices to show that on the interval $(2h-\epsilon, 2h+\epsilon)$ we have
$$\text{WF}\, \left (\int _{2P} U_{2P}(t, x, \mathcal R x) dx \right ) =\emptyset,$$
because this would imply that 
$$ \int \hat \rho (t) e^{-ikt} \left(\int _{2P} U_{2P}(t, x, \mathcal R x) dx \right)   \,dt = O(k^{-\infty}). $$ To prove the emptiness of the above wavefront set, we just need to follow the argument as in \cite{DuGu} and write 
$$ \int _{2P} U_{2P}(t, x, \mathcal R x) dx= \Pi_* \triangle^* ( U_{2P} \circ \mathcal R),$$
where $\triangle^*$ is the pullback by the diagonal map $\triangle: \R \times 2P \to \R \times 2P \times 2P$, and $\Pi_*$ is the pushforward by the projection map $\Pi: \R \times 2P \to \R$. The same wavefront calculations as in \cite{DuGu} shows that 
$$\text{WF} \int _{2P} U_{2P}(t, x, \mathcal R x)dx $$ $$  \subset \left \{ (t, \tau); \tau>0, \exists (x, \xi) \in T^*(2P): \Phi^t_{2P}(x, \xi)=(\mathcal R x, \xi) \right \}.$$ 
Now suppose $t_0\in (2h -\epsilon, 2h+\epsilon)$, and $\Phi^{t_0}_{2P}(x, \xi)=(\mathcal R x, \xi) $ for some $(x, \xi) \in T^*(2P)$. Then the projection of the geodesic segment $\{\Phi^{t}_{2P}(x, \xi)\}_{t \in [0, t_0]}$ onto $T^*P$, under the the natural projection $\pi: 2P \to P$, is a closed geodesic in $P$ (as a billiard table) of length $t_0$. However, by assumption the only periodic orbits in $P$ of length in the interval $(2h -\epsilon, 2h +\epsilon)$ must belong to the one-parameter family in Figure \ref{h}. Hence $t_0=2h$, and the projection of $\{\Phi^{t}_{2P}(x, \xi)\}_{t \in [0, 2h]}$ onto $P$ must be a bouncing ball orbit parallel to the altitude of the trapezoid. Unfolding this onto $2P$ we get that  $\Phi^{t_0}_{2P}(x, \xi)=(x, \xi)$, and since $\Phi^{t_0}_{2P}(x, \xi)=(\mathcal R x, \xi)$, we must have $\mathcal R x =x$, or equivalently $x \in \partial P$. However we {may repeat the same argument with a point $(x, \xi)$ in the orbit which is in the interior of $P$ since the orbit is parallel to the altitude.} This gives a contradiction. 

\end{proof}

\begin{remark} The wavefront calculation above shows that in general for the distribution $\int _{2P} U_{2P}(t, x, \mathcal R x) dx$ to have nonempty wavefront set near the length of an orbit (with no other lengths nearby), it is required that the orbit lies entirely on the boundary of $P$. This is precisely what happens in Proposition \ref{TopEdge}.  
\end{remark}


\begin{proof}[Proof of Proposition \ref{TopEdge}] Theorem 2 of \cite{Hi} gives the asymptotics for $\text{Tr}\, U_{2P}(t)$ near $t=2b$, which are exactly those given in Proposition \ref{TopEdge}. Hence, by Corollary \ref{DNtraces}, to prove this proposition it suffices to show that
\begin{equation} \label{Traces} \int _\R \int _{2P} \hat \rho (t) e^{-ikt} U_{2P}(t, x, \mathcal R x) dx dt = \int _\R \int _{2P} \hat \rho (t) e^{-ikt} U_{2P}(t, x, x) dx dt+ O(k^{-\frac{n}{2}-1}), \end{equation} where $n=2$ if $\beta \leq \alpha \neq \frac{\pi}{2}$, and $n=1$ when $\beta< \alpha = \frac{\pi}{2}$. We note that in fact $n$ corresponds to the number of diffractions because there is no diffraction at the top left vertex when $\alpha = \frac{\pi}{2}$. 

To prove the proposition we use Theorem 5 of \cite{Hi} which gives a parametrix for $U_{2P}(t, x, y)$ microlocalized near a diffractive geodesic connecting a point $x_0$ near $x$ to a point $y_0$ near $y$.
 
\begin{theorem}[Hillairet] Let $P$ be a polygon and $\gamma$ be a diffractive geodesic on $2P$ of length $t_0$, with initial and terminal points $x_0$ and $y_0$ in $2P$, going through $n$ diffractions at conic points $p_1, p_2, \dots, p_n$ of angles $\alpha_1, \alpha_2, \dots, \alpha_n$, with angles of diffractions $\beta_1, \beta_2, \dots, \beta_n$. Let $(r, \theta)$ and $(R, \Theta)$ be polar coordinates centered at $p_1$ and $p_n$, chosen in such a way that the line segments $\overline{x_0p_1}$ and $\overline{p_ny_0}$ correspond to $\theta=0$ and $\Theta=0$ respectively. Then microlocally near $\gamma$,  $U_{2P}$ is an FIO, and near $(t_0, x_0, y_0)$ and away from the conic points, has a parametrix of the form
$$ \tilde {U}_{2P, \gamma}(t, x, y) =\int_{\xi >0} e^{i\xi \left (t-R_0(x)-R_1(y)- \sum^{n-1}_{j=1}L_j \right )} a_\gamma(t, x, y, \xi) d \xi,$$where $L_j=d(p_j, p_{j+1})$ and as $\xi \to +\infty$ the amplitude $a_\gamma$ has an asymptotic expansion of the form $$a_\gamma (t, x, y, \xi)\sim \sum_{m=0}^ \infty a_m(t, x, y) \xi^{-\frac{n-1}{2}-m},$$ with leading term
$$a_0(t, x, y) =(2\pi)^{(n-3)/2} e^{−(n-1)i\pi/4}  \frac{S_\gamma(x, y)}{\sqrt{L_\gamma(x,y)}}.$$ 
Here
$$S_\gamma (x, y) = S_{\alpha_1}(\beta_1- \theta(x)) S_{\alpha_2}(\beta_2) \dots S_{\alpha_{n-1}}(\beta_{n-1}) S_{\alpha_n}(\Theta(y) - \beta_n),$$ and 
$$L_\gamma(x, y)=R_0(x) L_1 L_2 \dots L_{n-1} R_1(y),$$
where
$$S_\delta (\eta)= -\frac{\sin(\frac{2\pi^2}{\delta})}{2\delta\sin(\frac{\pi}{\delta}(\pi+\eta)) \sin(\frac{\pi}{\delta}(\pi-\eta))},$$ which at $\eta=0$ simplifies to $S_\delta(0)= -\frac{1}{\delta} \cot(\frac{\pi^2}{\delta})$. 
\end{theorem}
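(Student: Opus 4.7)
The plan is to construct $\tilde{U}_{2P,\gamma}$ by stitching together local parametrices on single cones and free Euclidean segments, then verify that the composed kernel agrees with $U_{2P}$ microlocally near $\gamma$ modulo smoothing. Away from the conic points, $2P$ is flat, so $U_{2P}$ coincides microlocally with the standard half-wave Euclidean FIO along any transmitted ray. The only nontrivial local ingredient is therefore the parametrix of the wave propagator on a single Euclidean cone $C_{\alpha_j}$ near its tip, microlocalized on a diffracted ray with prescribed ingoing/outgoing angles.

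For the single-cone step I would use the Cheeger--Taylor representation of $U_{C_\alpha}$ via separation of variables: angular eigenfunctions $e^{i\pi k\theta/\alpha}$ and Bessel radial kernels. Applying the Sommerfeld--Carslaw contour representation, one splits the kernel into a geometric-optics piece (transmitted/reflected rays corresponding to the universal-cover unfolding) and a diffracted piece supported at the tip. Writing the diffracted part as a radial oscillatory integral $\int_{\xi>0} e^{i\xi(t-r-R)} b(r,R,\theta,\Theta,\xi)\,d\xi$ after a standard large-$\xi$ expansion of the Bessel asymptotics, and then performing stationary phase in the angular contour variable, the critical point sits at the diffractive direction $\Theta-\theta$ and yields leading amplitude $S_{\alpha}(\Theta-\theta)/\sqrt{rR}\cdot \xi^{-1/2}$ with the Keller coefficient $S_\alpha$ as defined. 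This recovers the $n=1$ case of the statement, including the $(2\pi)^{-1}e^{-i\pi/4}$ factor from the one-dimensional stationary-phase calculation.

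For general $n$, I would iterate by FIO composition. Between consecutive diffractive points $p_j$ and $p_{j+1}$ I would insert the free Euclidean half-wave FIO (an oscillatory integral with phase $\xi_j(s - L_j)$ and amplitude $\xi_j^{-1/2}/\sqrt{L_j}$ from the nondegenerate Hessian of the Euclidean distance along the flat segment), and near each $p_j$ I would insert the single-cone diffractive parametrix constructed above. Microlocal cutoffs in small angular wedges around the $\beta_j$ select the single diffracted branch and discard all parasitic nearby rays, which is permissible because the ray $\gamma$ is isolated in the chosen microlocal window. The resulting composed kernel is an iterated oscillatory integral in $(\xi_0,\dots,\xi_n)$; the clean composition condition holds along $\gamma$ since the underlying canonical relations intersect transversally at a single point above $\gamma$, with all frequency variables proportional at the critical set. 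Performing stationary phase in the $n-1$ intermediate frequency variables identifies them with a single $\xi$, contributes $(2\pi)^{1/2}e^{-i\pi/4}$ per reduction, and collects the Euclidean $1/\sqrt{L_j}$ factors into the claimed $1/\sqrt{L_\gamma(x,y)}$. Counting powers of $\xi$: each diffraction contributes $\xi^{-1/2}$ and each stationary-phase reduction contributes $\xi^{-1/2}$, for total leading order $\xi^{-(n-1)/2}$, and the collected constants assemble into $(2\pi)^{(n-3)/2}e^{-(n-1)i\pi/4}$.

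The main obstacle is the combinatorial and analytic bookkeeping of the composition: verifying clean intersection along $\gamma$, choosing microlocal cutoffs so that only the prescribed diffractive branch survives at each $p_j$, and carefully tracking all normalization constants (the $(2\pi)^{(n-3)/2}$ and $e^{-(n-1)i\pi/4}$ in particular are sensitive to Fourier-transform conventions and to the conventions in the cone parametrix). A secondary technical point is to show that the error in the composed parametrix is smoothing microlocally near $\gamma$: this requires that $(D_t - \sqrt{\Delta_{2P}})\tilde{U}_{2P,\gamma}$ vanishes to infinite order in $\xi$ on a conic neighborhood of $\gamma$, which follows by differentiating the oscillatory integrals termwise in the asymptotic expansion of the amplitudes and matching the subprincipal transport equations on each flat segment (standard in FIO theory) and at each cone point (using the recursion from the Bessel asymptotics). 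Once these two pieces are in place, the theorem follows.
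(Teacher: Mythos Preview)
The paper does not prove this theorem at all: it is quoted verbatim as ``Theorem 5 of \cite{Hi}'' and used as a black box in the proof of Proposition \ref{TopEdge}. So there is no ``paper's own proof'' to compare against; the authors simply cite Hillairet's original result.

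Your sketch is a faithful outline of how Hillairet actually proves the statement in \cite{Hi}: the Cheeger--Taylor separation-of-variables parametrix on a single cone, the Sommerfeld contour to split geometric and diffracted parts, the identification of the diffracted kernel as an FIO of order $-1/2$ with the Keller coefficient $S_\alpha$ as principal symbol, and then clean FIO composition along the segments of $\gamma$ to assemble the $n$-diffraction parametrix. The bookkeeping issues you flag (clean intersection, cutoff choices, constant tracking) are exactly the technical content of Hillairet's paper, so your proposal is essentially a summary of \cite{Hi} rather than a new argument. That is appropriate here, since the present paper only needs the statement and not the proof.
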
 
We now apply this theorem to $\gamma_b$ in Figure \ref{b}. First we choose the coordinates so that the top left corner of $T$ is at $C_1:=(0, 0)$, and the top right corner is at $C_2:=(b, 0)$, hence $\gamma_b$ lies on the $x_1$ axis. We then reflect $T$ about the $x_1$ axis. In particular, this would give a natural neighborhood of the interior of $\gamma$, and the involution map $\mathcal R$ becomes $\mathcal R(x_1, x_2)= \mathcal R(x_1, -x_2)$. We also choose three cutoff functions, $\chi_{C_1}$, $\chi_{C_2}$, and $\chi$ on $2T$, all invariant under $\mathcal R$.  These are chosen to satisfy:  $\chi_{C_1}+\chi_{C_2}+\chi =1$ near $\gamma_b$;  $\chi_{C_1}$ and $\chi_{C_2}$ are supported in small neighborhoods of $C_1$ and $C_2$, respectively; and $\chi$ is supported away from $C_1$ and $C_2$. By a wavefront calculation as in the proof of the previous proposition we can see that
$$\int \hat \rho (t) e^{-ikt} \Tr U_{2P}(t) \circ \mathcal R\, \chi \, dt = \int \hat \rho (t) e^{-ikt} \Tr \tilde{U}_{2P, \gamma}(t) \circ \mathcal R \, \chi \, dt+ O(k^{-\infty})$$
Newt, we substitute the parametrix given in the statement of the theorem for $\tilde {U}_{2P,\gamma}(t, x, y)$. However, an immediate observation shows that $R_1( \mathcal R x) =R_1 (x)$, and $\Theta( \mathcal R x) = \Theta(x)$. Hence the phase functions and the leading terms of the amplitudes of the oscillatory integrals $\tilde {U}_{2P,\gamma}(t, x, \mathcal R x)$ and $\tilde {U}_{2P,\gamma}(t, x, x)$ agree on Supp$\chi$. By the stationary phase lemma, as performed in the proof of Theorem 5 of \cite{Hi}, we have 
$$\int \hat \rho (t) e^{-ikt} \Tr \tilde{U}_{2P, \gamma}(t) \circ \mathcal R \, \chi \, dt=\int \hat \rho (t) e^{-ikt} \Tr \tilde{U}_{2P, \gamma}(t) \, \chi \, dt + O(k^{-\frac{n}{2}-1}).$$ 
This implies \ref{Traces} because 
$$\int \hat \rho (t) e^{-ikt} \Tr {U}_{2P }(t)  \, \chi \, dt=\int \hat \rho (t) e^{-ikt} \Tr \tilde{U}_{2P, \gamma}(t)  \, \chi \, dt + O(k^{-\infty}).$$
Near the conic points $C_1$ and $C_2$ we can use the cyclicity of the trace, as used by \cite{Du, Hi, FoHaHi} to move the support of the integrands away from the conic points and reduce to the setting above. 
\end{proof}

\section{Proof of Theorem \ref{th:main}}\label{ProofOfMain}
Our first simple observation is
\begin{prop} \label{Shortest} The length of any periodic orbit in a trapezoidal table $T$ is strictly larger than $2h$ or $2b$ unless the orbit is a bouncing ball corresponding to one of the altitudes or it is the bouncing ball between the top two vertices. \end{prop}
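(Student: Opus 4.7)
The plan is to invoke the classical length-width inequality: for any closed piecewise smooth curve $\gamma \subset \R^2$ and any unit vector $v$, $\mathrm{length}(\gamma) \ge 2\, w_v(\gamma)$, where $w_v(\gamma) := \max_{p\in\gamma}\langle p,v\rangle - \min_{p\in\gamma}\langle p,v\rangle$ is the width of $\gamma$ in direction $v$; this is immediate from the fact that the projection of a closed curve onto a line is traversed at least twice. I would then do a case analysis on which edges of $T$ the periodic orbit $\gamma$ meets, applying the inequality with $v$ vertical (to exploit the distance $h$ between the parallel edges) and with $v$ horizontal (to exploit the top edge length $b$).

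First, if $\gamma$ meets both the top and the bottom edges, taking $v$ vertical gives $w_v(\gamma)=h$, so $\ell := \mathrm{length}(\gamma) \ge 2h$. Equality forces every tangent direction of $\gamma$ to be vertical, which collapses $\gamma$ to a single vertical segment traversed back and forth, namely an altitude bouncing ball. If $\gamma$ is not such a bouncing ball then $\ell > 2h \ge \min(2h,2b)$ and the conclusion follows.

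Otherwise $\gamma$ misses one of the parallel edges; assume without loss of generality that $\gamma$ misses the bottom (the case of missing the top is analogous and in fact stronger, since the bouncing ball diffracting between the two bottom corners has length $2B > 2b$). If $\gamma$ lies entirely in the top edge then any reflection at an interior point of the edge would send $\gamma$ transversally off it, so the endpoints of $\gamma$ must be the two top corners $C_1,C_2$, giving $\gamma=\gamma_b$ of length $2b$. If instead $\gamma$ has strictly positive vertical extent, I would apply the horizontal width bound: any interior bounce of $\gamma$ on a leg at height $y<h$ has $x$-coordinate strictly outside the interval $[x_{C_1},x_{C_2}]$ spanned by the top edge (since the legs slope outward below the top corners), while any diffraction at $C_1$ or $C_2$ keeps that corner's $x$-coordinate inside $\gamma$. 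Combining these facts yields $w_h(\gamma) > b$, and hence $\ell > 2b \ge \min(2h,2b)$.

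The main obstacle is to ensure that an orbit with positive vertical extent cannot altogether avoid both a leg bounce and a diffraction on the "other side" of the trapezoid. This is excluded by the standard wedge principle: no periodic billiard orbit lives in the interior of an open angular region bounded by two non-parallel edges, so $\gamma$ cannot be confined to the wedge formed by the top edge and a single leg meeting at a top corner. Any such orbit must therefore involve a bounce or diffraction extending it past $[x_{C_1},x_{C_2}]$, producing the required horizontal width and completing the case analysis.
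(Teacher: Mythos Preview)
Your approach is essentially the same as the paper's: both argue by case analysis on which edges the orbit meets, invoking the projection inequality $\ell(\gamma) \ge 2\, w_v(\gamma)$ (which you state explicitly and the paper uses implicitly) together with the wedge principle that no periodic orbit can be confined to the angular region between two adjacent non-parallel sides. The paper organizes the cases around whether the orbit meets the top edge transversally, both legs, or the bottom plus a single leg; you organize them around whether both parallel edges are met. The underlying geometry is identical, and if anything your write-up is more explicit about the equality case in the projection bound.

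One small gap to patch: your assertion that any leg bounce at height $y<h$ has $x$-coordinate \emph{strictly} outside $[x_{C_1},x_{C_2}]$ fails when $\alpha=\pi/2$, since then the left leg is vertical and sits exactly at $x=x_{C_1}$; your horizontal-width argument then only yields $w_h(\gamma)\ge b$, not the strict inequality you need. This is easy to repair. Reflection in a vertical wall preserves the vertical component of velocity, so any orbit segment arriving at the vertical left leg from the top edge or from $C_2$ (hence with strictly downward vertical component) leaves still heading downward and must next strike the right leg or the bottom. Either outcome forces a point with $x>x_{C_2}$ (giving $w_h(\gamma)>b$) or contradicts the hypothesis that $\gamma$ misses the bottom. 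With this adjustment your argument is complete.
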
 
\begin{proof} We note that any closed diffractive or non-diffractive geodesic in $T$ that starts from the top edge (including the corners) and is transversal (i.e. not tangent) to the top edge must be of length strictly larger than $2h$ unless it is parallel to the altitude.  Furthermore, any geodesic that touches the left and right edges (including the corners)  must be of length larger than $2b$ unless it is the bouncing ball orbit $\gamma_b$. If a geodesic touches the bottom edge and the right edge (respectively, left edge) then it must also visit the top edge or the left edge (respectively, right edge) and hence its length is larger than $2h$ or $2b$.   \end{proof}

The main theorem follows immediately by combining the following four propositions and the heat trace invariants $A$, $L$, and $q_{\alpha, \beta}$.  

\begin{prop} \label{Rectangular}  Let $T_1$ and $T_2$ be two trapezoids with the same Neumann spectra. If $T_1$ is a rectangle, then $T_2$ is a rectangle that is congruent to $T_1$.  \end{prop}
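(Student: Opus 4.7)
The plan is to combine the three heat trace invariants $A$, $L$, and $q_{\alpha,\beta}$ identified in Proposition~\ref{AngleInvariant} with the strict inequality $q_{\alpha,\beta}\geq 8/\pi^2$ from the remark following it, which is saturated precisely by rectangles. Since the heat trace expansion \eqref{HeatTrace} yields the same angle contribution $\sum_k(\pi^2-\theta_k^2)/(24\pi\theta_k)$ regardless of whether $B=N$ or $B=D$, the quantity $q_{\alpha,\beta}$ is a Neumann spectral invariant, and this is all we need.

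First I would observe that if $T_1$ is a rectangle then its base angles are $\alpha_1=\beta_1=\pi/2$, so
\[
q(T_1)=\frac{1}{(\pi/2)(\pi/2)}+\frac{1}{(\pi/2)(\pi/2)}=\frac{8}{\pi^2}.
\]
Since $T_1$ and $T_2$ are Neumann isospectral, $q(T_2)=q(T_1)=8/\pi^2$. By the remark following Proposition~\ref{AngleInvariant}, equality in $q\geq 8/\pi^2$ forces $T_2$ to be a rectangle as well. Thus both $T_1$ and $T_2$ are rectangles; denote their side lengths by $(a_1,b_1)$ and $(a_2,b_2)$.

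Next I would extract congruence from the remaining two heat invariants. Since $A$ and $L$ are also Neumann spectral invariants (by Proposition~\ref{AngleInvariant}), we have $a_1 b_1 = a_2 b_2$ and $a_1+b_1=a_2+b_2$. Hence $\{a_1,b_1\}$ and $\{a_2,b_2\}$ are the roots of the same monic quadratic $x^2-(L/2)x+A=0$, so they coincide as unordered pairs. Therefore $T_1$ and $T_2$ have identical side lengths and are congruent by a rigid motion of the plane.

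I do not expect any real obstacle here: the entire argument is already contained in the heat-trace invariants of Proposition~\ref{AngleInvariant} together with the equality case of $q\geq 8/\pi^2$, and no wave-trace input (i.e.\ neither Proposition~\ref{height} nor Proposition~\ref{TopEdge}) is needed. The only point requiring a moment's thought is that the angle coefficient in \eqref{HeatTrace} is independent of the boundary condition $B$, which is exactly what allows the Neumann heat expansion (recorded in Remark~\ref{HeatTraceNeumann}) to recover $q_{\alpha,\beta}$ in the first place.
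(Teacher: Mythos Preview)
Your proof is correct and follows essentially the same approach as the paper: use the equality case of $q\geq 8/\pi^2$ to force $T_2$ to be a rectangle, then use the area and perimeter invariants to conclude congruence. Your added detail (the quadratic argument for why $A$ and $L$ determine the side lengths of a rectangle) just spells out what the paper asserts in one clause.
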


\begin{prop}\label{WaveInvariants} Let $T_1$ and $T_2$ be two non-rectangular trapezoids with the same Neumann spectra. 
\begin{itemize}
\item If $h(T_1) \leq b(T_1)$, then $h(T_1)=h(T_2)$ and $b(T_1)=b(T_2)$. 

\item If $b(T_1) < h(T_1)$ then $b(T_1)=b(T_2)$. In addition if $\alpha(T_1) \neq \frac{\pi}{2}$, then $\alpha(T_2) \neq \frac{\pi}{2}$, and $C_{\alpha(T_1), \beta(T_1)}=C_{\alpha(T_2), \beta(T_2)}$, where $C_{\alpha, \beta}$ is defined by (\ref{C1}). Moreover, if $\alpha(T_1) =\frac{\pi}{2}$, then $\alpha(T_2) =\frac{\pi}{2}$ and $C_{ \beta(T_1)}=C_{ \beta(T_2)}$ where $C_\beta$ is given by (\ref{C2}). 
\end{itemize}
\end{prop}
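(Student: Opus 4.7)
The approach is to extract the lengths $2h$, $2b$ and the angle invariants of Proposition \ref{TopEdge} from the order and leading coefficient of the \emph{first} (positive-time) singularity of the Neumann wave trace. By Proposition \ref{Shortest}, any periodic orbit of a non-rectangular trapezoid $T$ has length $\ge\min(2h,2b)$, with equality only for the altitude family (length $2h$) and the top-edge orbit $\gamma_b$ (length $2b$); since the length spectrum is discrete, we may isolate $\min(2h,2b)$ by a small interval on which the only orbit lengths are these. Propositions \ref{height} and \ref{TopEdge} then give the order of the first singularity: $1^+$ whenever the altitude contributes (from the $k^{1/2}$ coefficient), versus $(-\tfrac12)^+$ (if $\alpha\ne\pi/2$) or $0^+$ (if $\alpha=\pi/2$) when only $\gamma_b$ contributes. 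Hence the first-singularity order already distinguishes $h\le b$ from $b<h$, and in the latter case further distinguishes $\alpha=\pi/2$ from $\alpha\ne\pi/2$.

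For the case $h(T_1)\le b(T_1)$: the first positive singularity of $T_1$'s Neumann wave trace has order $1^+$, which by the above forces $h(T_2)\le b(T_2)$ and locates $T_2$'s first singularity at $2h(T_2)=2h(T_1)$, so $h(T_1)=h(T_2)$. The coefficient of $k^{1/2}$ from Proposition \ref{height} equals $\frac{e^{i\pi/4}}{\sqrt{4\pi h}}\hat\rho(2h)\,bh$; equating this for $T_1$ and $T_2$ and using the already-extracted equality of heights then yields $b(T_1)=b(T_2)$. The degenerate subcase $h=b$ is handled by the fact that $\gamma_b$ contributes a singularity of order $\le 0^+$, which lies inside the $o(k^{1/2})$ error of Proposition \ref{height} and does not corrupt the leading coefficient.

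For the case $b(T_1)<h(T_1)$: the first singularity of $T_1$'s wave trace has order $<1^+$, which by the dichotomy above forces $b(T_2)<h(T_2)$ and $2b(T_1)=2b(T_2)$, hence $b(T_1)=b(T_2)$. The precise order of that singularity ($0^+$ versus $(-\tfrac12)^+$) must also match between $T_1$ and $T_2$, so $\alpha(T_1)=\pi/2\Longleftrightarrow\alpha(T_2)=\pi/2$. Equating the leading coefficients from Proposition \ref{TopEdge}---namely $-\pi i\,\hat\rho(2b)\,C_{\alpha,\beta}$ in the generic case, or $(2\pi b)^{1/2}\hat\rho(2b)\,e^{-i\pi/4}\,C_\beta$ in the right-angle case---then yields the stated equality of $C_{\alpha,\beta}$ or $C_\beta$.

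The main obstacle is the bookkeeping required to apply Propositions \ref{height} and \ref{TopEdge} in every subcase, since each carries a hypothesis that no other orbits have length equal or arbitrarily close to the target. This is precisely the role of Proposition \ref{Shortest}; the only genuinely new observation needed is that in the borderline $h=b$ scenario the altitude and top-edge orbits coexist at the same time but contribute singularities of \emph{incomparable} order, so the altitude's $k^{1/2}$ term dominates and the extraction of $bh$ (and thereby $b$) proceeds unhindered.
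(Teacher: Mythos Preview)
Your proof is correct and follows essentially the same argument as the paper's: read off the location, order, and leading coefficient of the first positive-time singularity of the Neumann wave trace, using Proposition \ref{Shortest} to verify the hypotheses of Propositions \ref{height} and \ref{TopEdge}, and let the order of that singularity sort the cases. The one place where the paper is slightly more explicit is the borderline $h=b$ case: there the literal hypotheses of Propositions \ref{height} and \ref{TopEdge} are not satisfied (each orbit has the other as a competitor at the same length), and the paper remarks that these propositions remain valid \emph{microlocally} near their respective orbits, which is precisely the fact underlying your claim that the $\gamma_b$ contribution is absorbed in the $o(k^{1/2})$ error.
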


\begin{prop} \label{2h}
If two trapezoids have the same area $A$, perimeter $L$, height $h$, and $b$, then they are congruent up to rigid motions.  
\end{prop}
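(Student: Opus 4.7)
The plan is to show that the four parameters $A$, $L$, $h$, and $b$ determine the trapezoid uniquely up to the reflection that swaps its two legs, and since any such reflection is a rigid motion, this yields the desired congruence. First, from the area formula $A=\tfrac{1}{2}(B+b)h$ I would immediately recover the longer base $B=2A/h-b$. Then, from the perimeter identity $L=B+b+\ell+\ell'$ I would recover the sum of the legs $s:=\ell+\ell'=L-B-b$.

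Next, I would exploit elementary trigonometry. Dropping altitudes from the two endpoints of the top edge partitions the trapezoid into two right triangles and a central rectangle, where the right triangles have vertical leg $h$ and horizontal legs $\sqrt{\ell^{2}-h^{2}}$ and $\sqrt{(\ell')^{2}-h^{2}}$, whose horizontal projections must sum to $d:=B-b\geq 0$. Hence the pair $(\ell,\ell')$ satisfies the system
\begin{equation*}
\ell+\ell'=s,\qquad \sqrt{\ell^{2}-h^{2}}+\sqrt{(\ell')^{2}-h^{2}}=d.
\end{equation*}
Setting $u=\sqrt{\ell^{2}-h^{2}}\in[0,d]$ and $v=d-u$, the system reduces to the single equation $f(u)=s$ where
\begin{equation*}
f(u):=\sqrt{u^{2}+h^{2}}+\sqrt{(d-u)^{2}+h^{2}}.
\end{equation*}
A direct computation gives $f'(u)=u/\sqrt{u^{2}+h^{2}}-(d-u)/\sqrt{(d-u)^{2}+h^{2}}$, so $f$ is symmetric about $u=d/2$, strictly decreasing on $[0,d/2]$, and strictly increasing on $[d/2,d]$. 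Consequently the level set $\{u\in[0,d]:f(u)=s\}$ is either the singleton $\{d/2\}$ or a symmetric pair $\{u_{0},d-u_{0}\}$; in either case, the induced unordered pair $\{\ell,\ell'\}=\{\sqrt{u^{2}+h^{2}},\sqrt{(d-u)^{2}+h^{2}}\}$ is uniquely determined.

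Finally, with $B$, $b$, $h$, and the unordered pair $\{\ell,\ell'\}$ all determined, the trapezoid is rigid: place the base of length $B$ on a fixed segment, erect the top edge of length $b$ at height $h$, and use $\{\ell,\ell'\}$ to locate the endpoints of the top edge. The only remaining ambiguity is whether $\ell$ sits on the left or the right, which is realized by reflection across the perpendicular bisector of the base, a rigid motion. The degenerate case $d=0$ forces $\ell=\ell'=h$ and the trapezoid is a rectangle, for which the conclusion is immediate. There is no substantive obstacle in this argument; the only point meriting care is the monotonicity analysis of $f$, which is a short calculus exercise.
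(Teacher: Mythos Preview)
Your argument is correct, and the overall architecture matches the paper's: recover $B$ from $A=\tfrac{1}{2}(B+b)h$, then use the two relations coming from $B-b$ and $L-B-b$ to pin down the remaining shape up to a left--right swap. The difference lies in how the last step is executed. The paper parametrizes by the base angles and observes that
\[
\frac{B-b}{h}=\cot\alpha+\cot\beta,\qquad \frac{L-B-b}{h}=\csc\alpha+\csc\beta,
\]
then uses the half-angle identities $\csc\theta\pm\cot\theta=\cot\tfrac{\theta}{2},\ \tan\tfrac{\theta}{2}$ to obtain $\tan\tfrac{\alpha}{2}+\tan\tfrac{\beta}{2}$ and $\tan\tfrac{\alpha}{2}\tan\tfrac{\beta}{2}$, so that $\tan\tfrac{\alpha}{2}$ and $\tan\tfrac{\beta}{2}$ are the roots of a known quadratic. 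You instead parametrize by the horizontal projection $u$ of one leg and argue via the symmetry and strict convexity of $f(u)=\sqrt{u^2+h^2}+\sqrt{(d-u)^2+h^2}$ that the level set $\{f=s\}$ is a singleton or a symmetric pair, hence the unordered pair $\{\ell,\ell'\}$ is determined. The paper's route is purely algebraic and yields the angles directly without any calculus; your route is more geometric, bypasses the half-angle trick, and makes the ``up to reflection'' ambiguity visibly the $u\leftrightarrow d-u$ symmetry. Both are short and elementary; neither has an advantage that matters for the paper's purposes.
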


\begin{prop} \label{2b}
If two trapezoids, with $\alpha(T_1)\, \text{and} \; \alpha(T_2) \neq \frac{\pi}{2}$, have the same area $A$, angle invariant $q_{\alpha, \beta}$, the same $b$, and the same $C_{\alpha, \beta}$, then they are congruent up to rigid motions.  Moreover, if two non-rectangular trapezoids, with $\alpha(T_1)=\alpha(T_2)= \frac{\pi}{2}$, have the same area $A$, angle invariant $q_{\alpha, \beta}$, and the same $b$, then they are congruent up to rigid motions.
\end{prop}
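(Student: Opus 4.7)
The plan is to use the given invariants to recover the parameters $(b,h,\alpha,\beta)$ of the trapezoid one at a time. Since $b$ is already given, and
\[
A \;=\; bh \;+\; \tfrac{h^2}{2}\bigl(\cot\alpha+\cot\beta\bigr),
\]
with $\cot\alpha+\cot\beta\geq 0$ on the admissible range $0<\beta\leq\alpha\leq\pi/2$, once $(\alpha,\beta)$ is known the area invariant $A$ determines $h>0$ uniquely (the positive root of the resulting quadratic is unique because the right-hand side is a strictly increasing function of $h>0$). Thus the entire problem reduces to recovering the unordered pair $\{\alpha,\beta\}$ from the angular invariants.

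Case 2 ($\alpha=\pi/2$) is immediate: with $\alpha=\pi/2$ fixed, $q_{\alpha,\beta} = \tfrac{4}{\pi^2} + \tfrac{1}{\beta(\pi-\beta)}$, and since $x\mapsto x(\pi-x)$ is strictly monotonic on $(0,\pi/2)$ and $\beta<\pi/2$ by the non-rectangular hypothesis, $q$ determines $\beta$ uniquely.

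For Case 1 ($\alpha<\pi/2$), set $\phi(x)=\tfrac{1}{x(\pi-x)}$ and $\psi(x)=\tfrac{\cot(\pi^2/(2\pi-2x))}{\pi-x}$, so that $q=\phi(\alpha)+\phi(\beta)$ and $C=\psi(\alpha)\psi(\beta)$. The strategy is to show the map $\Psi(\alpha,\beta):=(q,C)$ is injective on $D:=\{(\alpha,\beta):0<\beta\leq\alpha<\pi/2\}$. Fix $q_0$; since $\phi$ is strictly monotonic on $(0,\pi/2)$, the level set $\{\phi(\alpha)+\phi(\beta)=q_0\}$ is a smooth curve symmetric under $(\alpha,\beta)\leftrightarrow(\beta,\alpha)$. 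Differentiating $C$ along this curve and using $\phi'(\alpha)\,d\alpha+\phi'(\beta)\,d\beta=0$, one finds that the critical points of $C$ along the level curve occur precisely where
\[
H(\alpha) = H(\beta), \qquad H(x) := \frac{\psi'(x)}{\psi(x)\,\phi'(x)}.
\]
If $H$ is strictly monotonic on $(0,\pi/2)$, then the only critical point of $C$ on the level curve is the symmetric point $\alpha=\beta$ (where the unordered pair is already determined by $q$ alone), so $C$ is strictly monotonic on the open half $\alpha>\beta$, and consequently $(q_0,C_0)$ pins down $(\alpha,\beta)\in D$ uniquely.

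The main obstacle is therefore verifying strict monotonicity of $H(x)$ on $(0,\pi/2)$. This is a concrete but delicate calculus check: with $\phi'(x)=-(\pi-2x)/(x(\pi-x))^2$ and the explicit form of $\psi$ (whose derivative brings in $\csc^2(\pi^2/(2\pi-2x))$), the substitution $\theta=\pi^2/(2(\pi-x))\in(\pi/2,\pi)$ reduces $\psi$ to $\tfrac{2\theta\cot\theta}{\pi^2}$, and $H'(x)$ should factor as a product of manifestly signed quantities times a residual trigonometric expression in $\theta$. I expect to control this residual via standard inequalities (monotonicity/convexity of $\cot$ on $(\pi/2,\pi)$, bounds of the form $\sin 2\theta<0$ on $(\pi/2,\pi)$, comparisons between $\theta\cot\theta$ and elementary bounds), and thereby establish $H'$ has a fixed sign throughout $(0,\pi/2)$. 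Once this is done, the outline above combined with $A$ recovering $h$ completes the proof of Proposition \ref{2b}.
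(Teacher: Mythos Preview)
Your approach is essentially the paper's: both reduce to showing that $C_{\alpha,\beta}$ is strictly monotone along each level curve of $q_{\alpha,\beta}$, which in turn reduces to monotonicity of a single-variable function—your $H$ is, up to sign, the paper's $G(\sigma)=\frac{\sigma^2}{\pi-2\sigma}\bigl(\pi-\sigma-\frac{\pi^2}{\sin(\pi^2/(\pi-\sigma))}\bigr)$, since the paper factors $\frac{d}{d\alpha}\log C_{\alpha,\beta}=-F'(\alpha)\bigl(G(\alpha)-G(\beta)\bigr)$ exactly as your critical-point analysis predicts. The one real gap is that you have not actually established this monotonicity, only expressed the hope that ``standard inequalities'' will do it; the paper carries out the substitution $\theta=\pi^2/(\pi-\sigma)-\pi\in(0,\pi)$ (a variant of yours) and then proves $\frac{d}{d\theta}\log G>0$ by splitting into the ranges $\theta\in[\pi/2,\pi)$, $(\pi/3,\pi/2)$, and $(0,\pi/3]$ with explicit elementary estimates (e.g.\ $\sin\theta>2\theta/\pi$ on $(0,\pi/2)$), so your expectation is correct but this is where the genuine work lies.
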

We now give the proofs of these propositions. 

\begin{proof} [Proof of Proposition \ref{Rectangular} ] The angle invariant satisfies $$q \geq
\frac{8}{\pi^2}$$ with equality if and only if the
trapezoid is a rectangle.  {Since $T_1$ and $T_2$ are isospectral, they have the same angle invariant.  Consequently, since $T_1$ is a rectangle, $T_2$ is as well.  Furthermore, by isospectrality, $T_1$ and $T_2$ have the same area and perimeter, and these uniquely determine a rectangle up to rigid motions.}  
\end{proof}

\begin{proof} [Proof of Proposition \ref{WaveInvariants}] 
First suppose $h(T_1) < b(T_1)$. Then by Proposition \ref{Shortest}, $2h(T_1)$ is the shortest length in $\mathcal L_{T_1}$ and there are no orbits other than the one-parameter family of altitudes having the same length. Hence by Proposition \ref{height}, both Dirichlet and Neumann wave traces of $T_1$ have a singularity of order $1^+$ at $t=2h(T_1)$ where up to a constant the leading coefficient equals  $b(T_1)h(T_1)$, the area of the inner rectangle. Since $T_1$ and $T_2$ are isospectral, the same must hold for the wave trace of $T_2$. In particular, we must have $2h(T_1)=2h(T_2)$, and $b(T_1)h(T_1)=b(T_2)h(T_2)$, so that $b(T_1) = b(T_2)$.

If $b(T_1) < h(T_1)$, then again using Proposition \ref{Shortest}, $2b(T_1)$ is the shortest length in $\mathcal L_{T_1}$.  By Proposition \ref{TopEdge}, the Neumann wave trace of $T_1$ has a singularity at $t=2b(T_1)$. If the order of this singularity is $(-\frac{1}{2})^+$, then we know that $\alpha(T_1) \neq \frac{\pi}{2}$, and the same type of singularity is found in the Neumann wave trace of $T_2$, {thus $\alpha(T_2) \neq \frac{\pi}{2}$ as well. Furthermore, $2b(T_1)=2b(T_2)$, and $C_{\alpha(T_1), \beta(T_1)}=C_{\alpha(T_2), \beta(T_2)}$.  Similarly, if the order of this singularity is $0^+$, then we know that there is only one diffraction, meaning that $\alpha(T_1) = \frac{\pi}{2}$. Since the singularity in the wave trace of $T_2$ must be the same, we must have $\alpha(T_2) = \frac{\pi}{2}$, $2b(T_1)=2b(T_2)$ and $C_{\beta(T_1)}=C_{\beta(T_2)}$. }

When $h(T_1)=b(T_1)$, since there are no orbits of length $2h(T_1)=2b(T_1)$ other than $\gamma_h$ and $\gamma_b$, the singularities of the wave trace at $t=2h(T_1)$ and $t=2b(T_1)$ add. This is because in fact Propositions \ref{height} and \ref{TopEdge} are also valid microlocally near their corresponding orbits. In this case since the singularity at $t=2h(T_1)$ is larger, and it contributes to the leading singularity of the wave trace. Hence as in the first case, we have $2h(T_1)=2h(T_2)$, and $b(T_1)h(T_1)=b(T_2)h(T_2)$, thus $b(T_1) = b(T_2)$. 
\end{proof}

\begin{proof} [Proof of Proposition \ref{2h}] If $A$, $b$, and $h$ are known, then obviously $B$ can be determined. On the other hand, it is clear from Figure \ref{trap} that $$\frac{B-b}{h} =\cot \alpha+\cot \beta, \qquad \frac{L-B-b}{h}=\csc \alpha+\csc \beta.$$ Hence $\csc \alpha+\csc \beta$ and $\cot \alpha+\cot \beta$ are spectrally determined. By adding and subtracting these two invariants we arrive at
$$ \cot \frac{\alpha}{2}+\cot \frac{\beta}{2}, \qquad \tan \frac{\alpha}{2}+\tan \frac{\beta}{2},$$ as two spectrally determined quantities. Since $\cot \frac{\alpha}{2}+\cot \frac{\beta}{2}= \frac{ \tan \frac{\alpha}{2}+\tan \frac{\beta}{2}}{\tan \frac{\alpha}{2}\tan \frac{\beta}{2}}$, we can also determine $\tan \frac{\alpha}{2}\tan \frac{\beta}{2}$, which uniquely determines $\tan \frac{\alpha}{2}$ and $\tan \frac{\beta}{2}$.  Therefore $\alpha$ and $\beta$ are spectrally determined because $0< \beta \leq \alpha \leq \frac{\pi}{2}$. 

\end{proof}

\begin{proof} [Proof of Proposition  \ref{2b}]

Our plan is to show that for $\beta \leq \alpha < \frac{\pi}{2}$, the pair $\{ q_{\alpha, \beta}, C_{\alpha, \beta} \}$ determines $\alpha$ and $\beta$ uniquely. To do this we show that $C_{\alpha, \beta}$ is an increasing function on the level curves of $q_{\alpha, \beta}$ as $\alpha$ increases. 

We recall from Proposition \ref{AngleInvariant} that \begin{equation} \label{dqfa}
q_{\alpha, \beta}=\frac{1}{\alpha(\pi-\alpha)}+\frac{1}{\beta(\pi-\beta)} =
F(\alpha) + F(\beta), \quad F(\alpha)
:= \frac{1}{\alpha (\pi - \alpha)}.
\end{equation}
Under the assumptions
that $q\geq 8/\pi^2$ and $\beta \leq \alpha$, which are always valid for trapezoids, each $q$ and $\alpha$ uniquely
determine $\beta$ by
$$\beta=\beta(\alpha)=\frac\pi
2\left(1-\sqrt{1-\frac{4}{\pi^2(q-F(\alpha))}}\right).
$$

Since $\beta \leq \alpha$, the range for $\alpha$ is
{$[\alpha_0,\pi/2)$}, where $\alpha_0$ is
determined by setting $\alpha= \alpha_0 = \beta$,
\[
\alpha_0=\frac\pi 2\left(1-\sqrt{1-\frac{8}{q\pi^2}}\right).
\]
Then $F(\alpha_0)=q/2$.   Using implicit differentiation in the equation  
$$q = F(\alpha) + F(\beta),$$
we have
\begin{equation} \label{dbeta} \beta'(\alpha) = - \frac{F'(\alpha)}{F'(\beta)}, \end{equation} where 
$$ \label{dF}
 F'(\alpha)=-\frac{\pi-2\alpha}{\alpha^2(\pi-\alpha)^2}\leq 0
$$
for $\alpha\in[\alpha_0,\pi/2]$.  {The inequality is strict when $\alpha > \beta$.}  

We also recall that 
$$ C_{\alpha, \beta}= \frac{ \cot(\frac{\pi^2}{2\pi-2\alpha})\cot(\frac{\pi^2}{2\pi-2\beta})}{(\pi-\alpha)(\pi-\beta)}.$$ 
Since $\alpha, \; \beta \in (0, \frac{\pi}{2})$, both $\cot(\frac{\pi^2}{2\pi-2\alpha})$ and $\cot(\frac{\pi^2}{2\pi-2\beta})$ are negative, and $C_{\alpha, \beta} >0$.  We will show that $\frac{d}{d\alpha} \log C_{\alpha, \beta} \geq 0$, and that it is zero if and only if $\beta=\alpha=\alpha_0$. Using  the identity $\frac{1+ \cot^2(a)}{\cot a}=\frac{2}{\sin 2a}$ and (\ref{dbeta}), we get
\begin{align*}   \frac{d}{d\alpha} \log C_{\alpha, \beta}  & = \left (\frac{1}{\pi- \alpha} -\frac{\pi^2}{(\pi-\alpha)^2\sin { \frac{\pi^2}{\pi-\alpha}}} \right) + \beta ' (\alpha) \left ( \frac{1}{\pi- \beta} -\frac{\pi^2}{(\pi-\beta)^2\sin { \frac{\pi^2}{\pi-\beta}}} \right )\\ 
&  =-F'(\alpha) \left ( \frac{\alpha^2}{\pi-2 \alpha}\left ( \pi-\alpha-\frac{\pi^2}{\sin \frac{\pi^2}{\pi -\alpha}}\right) - \frac{\beta^2}{\pi-2 \beta}\left ( \pi-\beta-\frac{\pi^2}{\sin \frac{\pi^2}{\pi -\beta}}\right) \right ).
\end{align*}

We now define
$$G(\sigma)=\frac{\sigma^2}{\pi-2 \sigma}\left ( \pi-\sigma-\frac{\pi^2}{\sin \frac{\pi^2}{\pi -\sigma}}\right).$$ 
Since $F'(\alpha) <0$ {for all $\alpha > \beta$, } and $ \beta \leq \alpha $, to prove that  $\frac{d}{d\alpha} \log C_{\alpha, \beta} \geq 0$ {with equality if and only if $\beta = \alpha$}, we need to show that $G(\sigma)$ is an increasing function on $(0, \frac{\pi}{2})$. It is more convenient to change the variable by 
$$ \theta = \frac{\pi^2}{\pi -\sigma}-\pi.$$ 
{Then since $\sigma \in (0, \pi/2)$, we have $\theta \in \left(0, \pi \right).$}

Then $G$ as a function of $\theta$ becomes
$$G(\theta)= \frac{\pi^3 \theta^2}{(\pi+\theta)(\pi-\theta)} \left (\frac{1}{\sin \theta} + \frac{1}{\pi+\theta} \right ), \qquad \theta \in (0, \pi). $$ 
To show that $G(\theta)$ is increasing on $(0, \pi)$, we prove that $\frac{d}{d\theta} G(\theta) >0$ on $(0, \pi)$.  {Since $\theta \in (0, \pi)$, it is clear to see that $G(\theta) > 0$.  It therefore suffices to prove that $\frac{d}{d\theta} \log G(\theta) > 0$.} A simple calculation shows that 
\begin{align*} \frac{d}{d\theta} {\log} G(\theta) & = \frac{2}{\theta} -\frac{1}{\pi+\theta} +\frac{1}{\pi-\theta} - \frac{\frac{\cos \theta}{\sin^2 \theta}+\frac{1}{(\pi+\theta)^2}}{\frac{1}{\sin \theta}+\frac{1}{\pi+\theta}}\\
& > \frac{2}{\theta} -\frac{2}{\pi+\theta} +\frac{1}{\pi-\theta} - \frac{\frac{\cos \theta}{\sin^2 \theta}}{\frac{1}{\sin \theta}+\frac{1}{\pi+\theta}}. 
\end{align*}

Clearly if $ \theta \in [\frac{\pi}{2}, \pi)$, then this implies that 
$$  \frac{d}{d\theta} {\log} G(\theta) > \frac{2}{\theta} -\frac{2}{\pi+\theta} +\frac{1}{\pi-\theta} > \frac{8}{3 \pi} >0.$$ 
On the other hand, if $ \theta \in (0, \frac{\pi}{2})$, using the inequality $\sin \theta > \frac{2}{\pi} \theta$, 
\begin{align*} \frac{d}{d\theta} {\log}G(\theta) & > \frac{2}{\theta} -\frac{2}{\pi+\theta} +\frac{1}{\pi-\theta}  -\frac{\cos \theta}{\sin \theta} \\
& >\left(2-\frac{\pi}{2} \right) \frac{1}{\theta} -\frac{2}{\pi+\theta} +\frac{1}{\pi-\theta} \\
&= \left(2-\frac{\pi}{2} \right) \frac{1}{\theta} +\frac{3\theta -\pi}{\pi^2 -\theta^2}.
\end{align*}
Obviously the last quantity is positive if $\theta \in (\frac{\pi}{3}, \frac{\pi}{2})$ because it is the sum of two positive terms.  Moreover, for $\theta \in (0, \frac{\pi}{3}]$, we  have the lower bound $\frac{\frac{3(4- \pi)}{2}-1}{\pi}$, which is larger than $0.09$.  

The above calculations show that $q_{\alpha, \beta}$ and $C_{\alpha, \beta}$ uniquely determine $\alpha$ and $\beta$. Since $b$ and $A$ are also known, the trapezoid is uniquely determined. 

We also note that in the case $\alpha =\frac{\pi}{2}$, the angle invariant $q$ determines $\beta$ uniquely, therefore the trapezoid can be determined again from the knowledge of $A$ and $b$.

\end{proof}

\section*{Acknowledgements} 
The first author is grateful to UC Irvine for its support. The second author is partially supported by NSF grant DMS-1547878.

\end{document}